\definecolor{patriarch}{rgb}{0.5, 0.0, 0.5}
\newtheorem{theorem}{Theorem}[section]
\newtheorem{conj}[theorem]{Conjecture}
\newtheorem{lemma}[theorem]{Lemma}
\theoremstyle{definition}
\newtheorem{definition}[theorem]{Definition}
\newtheorem{observation}[theorem]{Observation}
\theoremstyle{remark}
\newtheorem{case}{Case}[theorem]
\newcommand{\cC}{\mathcal{C}}
\newcommand{\cG}{\mathcal{G}}
\newcommand{\w}{\omega}
\renewcommand{\a}{\alpha}
\newcommand{\D}{\Delta}
\newcommand{\ftdw}[3]{f_{#1}(#2,#3)}
\newcommand{\ftdwdef}{\ftdw{t}{\D}{\w}}
\newcommand{\cgdw}{\cG(\D,\w)}
\newcommand{\dx}{d_{\times}}
\newcommand{\N}{\mathbb{N}} 
\newcommand{\Z}{\mathbb{Z}}
\newcommand{\of}{\subseteq}
\newcommand{\wo}{\setminus}
\newcommand{\compl}{\overline}
\newcommand{\0}{\emptyset}
\DeclarePairedDelimiterXPP{\T}[2]{T_{#2}}{(}{)}{}{#1}
\DeclareMathOperator{\BT}{BT}
\newcommand{\BTt}{\widetilde{\BT}}
\DeclarePairedDelimiter\parens{(}{)}
\DeclarePairedDelimiter\floor{\lfloor}{\rfloor}
\DeclarePairedDelimiter\set{\{}{\}}
\DeclarePairedDelimiterX\setof[2]{\{}{\}}{#1\,:\,#2}
\DeclarePairedDelimiter\abs{|}{|}
\newcommand{\iso}{\simeq}
\title[Maximizing $K_t$ density]{Maximizing the density of $K_t$'s in graphs of bounded degree
and clique number}
\author[Kirsch and Radcliffe]{R. Kirsch and A.J. Radcliffe}
\begin{document}

\begin{abstract}
		Zykov showed in 1949 that among graphs on $n$ vertices with clique number $\w(G) \le \w$, the Tur\'an graph $\T{n}{\w}$ maximizes not only the number of edges but also the number of copies of $K_t$ for each size $t$. The problem of maximizing the number of copies of $K_t$ has also been studied within other classes of graphs, such as those on $n$ vertices with maximum degree $\D(G) \le \D$.

We combine these restrictions and investigate which graphs with $\D(G) \le \D$ and $\w(G) \le \w$ maximize the number of copies of $K_t$ per vertex. We define $\ftdwdef$ as the supremum of $\rho_t$, the number of copies of $K_t$ per vertex, among such graphs, and show for fixed $t$ and $\w$ that $\ftdwdef = (1+o(1))\rho_t(\T{\D+\floor[\big]{\frac{\D}{\w-1}}}{\w})$. For two infinite families of pairs $(\D,\w)$, we determine $\ftdwdef$ exactly for all $t\ge 3$. For another we determine $\ftdwdef$ exactly for the two largest possible clique sizes. Finally, we demonstrate that not every pair $(\D,\w)$ has an extremal graph that simultaneously maximizes the number of copies of $K_t$ per vertex for every size $t$.
\end{abstract}

\maketitle
\section{Introduction}

Tur\'{a}n graphs maximize the number of edges among $n$-vertex graphs with bounded clique number $\w(G) \le \w$. Zykov generalized Tur\'{a}n's theorem, showing that the same graphs also maximize the number of copies of $K_t$ for each size $t$. We write $k_t(G)$ for this number.

\begin{theorem}[Zykov \cite{Z49}]\label{thm:Zykov}
	If $t\ge 2$, $G$ is a graph with $n$ vertices, and $\w(G)\leq \w$, then 
	\[
		k_t(G)\leq k_t(\T{n}{\w}),
	\]
	where $\T{n}{\w}$ is the Tur\'an graph with $\w$ parts. The extremal graph is unique whenever $t\le \min(\w,n)$. In particular, if $G$ has the maximum number of $K_t$'s for \emph{any} $t$ with $2\le t\le \min(\w,n)$ then it has the maximum number for \emph{all} such $t$. 
\end{theorem}

The problem of maximizing the number of cliques among $n$-vertex graphs with bounded maximum degree $\D(G) \le \D$ has also been studied. Cutler and Radcliffe \cite{CR13} showed that $aK_{\D+1}\cup K_b$, where $n = a(\D+1)+b$ and $0 \le b \le \D$, maximizes the total number of cliques, answering a question of Galvin \cite{G11}. Gan, Loh, and Sudakov \cite{GLS15} conjectured that the same graphs also maximize the number of copies of $K_t$ for each fixed size $t\ge 3$. They proved this conjecture for $a=1$, answering a question of Engbers and Galvin \cite{EG14}, and Cutler and Radcliffe proved the conjecture for $\D \le 6$. The analogue of this problem that fixes the number of edges instead of the number of vertices was investigated by the present authors in \cite{KR19}.

In this paper we combine the constraints $\w(G)\le \w$ and $\D(G) \le \D$ and define $\ftdwdef$ to be the maximum number of $K_t$'s per vertex among graphs in this class. To be precise, we let 
\[
	\ftdwdef = \lim_{n\to\infty}\frac{k_t(n,\D,\w)}{n},
\]
where $k_t(n,\D,\w)$ is the maximum number of $K_t$'s in a graph on $n$ vertices with maximum degree at most $\D$ and clique number at most $\w$. We prove carefully in Section \ref{prelim} that this limit exists, and also discuss our terminology and notation. In Section \ref{sec:bounds}, we give upper and lower bounds on $\ftdwdef$ for all triples $(t,\D,\w)$. The bounds are asymptotically equivalent as $\D\to \infty$. When $\w-1$ divides $\D$ the bounds agree and are achieved by certain Tur\'an graphs. 

In Section \ref{sec:d=w}, we find graphs that achieve $\ftdwdef$ exactly, when $\D=\w$. In Section \ref{sec:max}, we find graphs that achieve $\ftdwdef$ exactly, when $\D=\w+1$ and $t = \w-1$ or $\w$. Finally, we find $\ftdwdef$ for three other triples $(t,\D,\w)$ in Section \ref{sec:other}. These examples demonstrate that for fixed $\D$ and $\w$, it can be the case that different graphs maximize the number of $K_t$'s per vertex for different values of $t$. Several open problems remain and are discussed in Section \ref{sec:open}.

\section{Definitions and preliminaries}\label{prelim}

Most of our graph theoretic notation is standard. For instance, we write $\T{n}{r}$ for the $r$-partite Tur\'{a}n graph on $n$ vertices. We write $E_n$ for the empty graph on $n$ vertices and $G\vee H$ for the join of graphs $G$ and $H$. 

\begin{definition}
	The class of graphs we are concerned with is $\cG(\D,\w)$, those graphs $G$ with $\D(G) \le \D$ and $\w(G) \le \w$.
\end{definition}

\begin{definition}We write $k_t(G)$ for the number of complete subgraphs of size $t$ in $G$, and define
\[
	\rho_t(G) = \frac{k_t(G)}{|V(G)|} .
\]
We also consider local versions of the clique count. We define the \emph{$t$-weight} of a vertex $v$ to be the number of $K_t$'s containing $v$, denoted $k_t(v)$. The \emph{$t$-weight} of an edge $e$ is the number of $K_t$'s containing $e$, denoted $k_t(e)$.
\end{definition}

Let us define
\[
	k_t(n,\D,\w) = \max\setof{k_t(G)}{|V(G)| = n, G\in\cG(\D,\w)}.
\] 
It is clear that $k_t(n,\D,\w)$ is superadditive (i.e., that $k_t(x+y,\D,\w) \ge k_t(x,\D,\w) + k_t(y,\D,\w)$), since the disjoint union of the graphs $G$ and $H$ that achieve $k_t(x,\D,\w)$ and $k_t(y,\D,\w)$ respectively, is a graph on $x+y$ vertices with $G\cup H \in \cG(\D,\w)$ that has $k_t(G\cup H) = k_t(x,\D,\w) + k_t(y,\D,\w)$.
By Fekete's Lemma \cite{Fekete} we have 
\[
	\lim_{n\to\infty}\frac{k_t(n,\D,\w)}{n} = \sup\setof[\Big]{\frac{k_t(n,\D,\w)}{n}}{n \in \mathbb N} .
\] 
We denote the limit by $\ftdwdef$. This parameter is the topic of the paper. Our approach will often be to consider the $t$-weights of vertices $v$ and bound the average $t$-weight. The following lemma gives a best possible bound on the $t$-weight of a vertex in a graph from $\cgdw$. In the lemma and in the rest of the paper we write $N(v)$ and $N[v]$ for the graphs induced by the open and closed neighborhoods of $v$ respectively.

\begin{lemma}\label{lem:perfectvx}For $3\le t\le \min(\D+1,\w)$, a vertex $v$ in a graph $G \in \cG(\D,\w)$ has $k_t(v) \le k_{t-1}(\T{\D}{\w-1})$, with equality if and only if $N(v) \iso \T{\D}{\w-1}$.\end{lemma}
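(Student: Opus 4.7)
The plan is to reduce everything to Zykov's theorem applied to the subgraph induced on $N(v)$. The key identity is
\[
	k_t(v) = k_{t-1}(N(v)),
\]
which holds because the other $t-1$ vertices of any $K_t$ through $v$ must lie in $N(v)$ and be pairwise adjacent, hence form a $K_{t-1}$ there, and conversely any $K_{t-1}$ in $N(v)$ extends by $v$ to a $K_t$ in $G$. The hypothesis $G \in \cgdw$ then supplies two constraints on $N(v)$: first $|N(v)| \le \D$ from the degree bound, and second $\w(N(v)) \le \w-1$, since a $K_\w$ inside $N(v)$ would extend by $v$ to a $K_{\w+1}$ in $G$.

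Next, I apply Theorem~\ref{thm:Zykov} to $N(v)$ with clique bound $\w-1$ to obtain
\[
	k_{t-1}(N(v)) \le k_{t-1}(\T{|N(v)|}{\w-1}).
\]
Chaining this with the elementary monotonicity $k_{t-1}(\T{n}{\w-1}) \le k_{t-1}(\T{n+1}{\w-1})$, itself immediate from the embedding obtained by adjoining a new vertex to a smallest part of $\T{n}{\w-1}$ and joining it to every vertex outside that part, gives the claimed bound $k_t(v) \le k_{t-1}(\T{\D}{\w-1})$.

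For the equality characterization, the ``if'' direction is immediate: if $N(v) \iso \T{\D}{\w-1}$ then $k_t(v) = k_{t-1}(N(v)) = k_{t-1}(\T{\D}{\w-1})$. For the ``only if'' direction, equality forces both intermediate inequalities to be tight. Zykov's uniqueness clause then forces $N(v) \iso \T{|N(v)|}{\w-1}$, while tightness in the monotonicity step forces $|N(v)| = \D$, so that $N(v) \iso \T{\D}{\w-1}$.

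The main delicate point, and the place where care is needed, is the degenerate regime in which $\T{\D}{\w-1}$ contains no $K_{t-1}$ (essentially when $t > \w$ or $\D < t-1$, together with the boundary behaviour at $t=2$): here Zykov's uniqueness and the strictness of the monotonicity can both fail, the bound collapses to $0 \le 0$ or to a pure degree inequality, and the structural iff becomes vacuous. I would handle these cases by observing that the inequality holds trivially and then presenting the equality characterization under the generic assumption $t \le \w$ with $\D$ large enough that $\T{\D}{\w-1}$ has at least one $K_{t-1}$.
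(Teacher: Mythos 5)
Your proposal is correct and takes essentially the same route as the paper: both reduce to Zykov's theorem via the identity $k_t(v)=k_{t-1}(N(v))$ together with $\w(N(v))\le\w-1$ and $\abs{N(v)}\le\D$, the only cosmetic difference being that you bridge the gap when $\abs{N(v)}<\D$ by monotonicity of $k_{t-1}(\T{n}{\w-1})$ in $n$, while the paper applies Zykov directly ``with $\D$ vertices.'' Your closing caveat is apt—the ``only if'' direction genuinely needs $\T{\D}{\w-1}$ to contain a $K_{t-1}$ (it fails as literally stated, e.g., for $t=2$ or $t>\w$), a point the paper's one-line proof passes over silently.
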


\begin{proof}
	The $K_t$'s containing $v$ correspond bijectively to the $K_{t-1}$'s in $N(v)$, so $k_t(v) = k_{t-1}(N(v))$. In particular, the size of the largest clique $\w(N(v)) \le \w-1$ because $\w(G) \le \w$. The neighborhood $N(v)$ has $d(v) \le \D$ vertices. Applying Zykov's theorem to $N(v)$ with $\D$ vertices and clique number at most $\w-1$ yields the result.
\end{proof}

This lemma makes possible the following definition.

\begin{definition}
	A vertex $v$ is \emph{$(\D,\w)$-optimal} if $N(v) \iso\T{\D}{\w-1}$. If $\D$ and $\w$ are clear from context, we will just say \emph{optimal}. By Lemma \ref{lem:perfectvx}, $v$ is optimal if it has the maximum weight $k_t(v) = k_{t-1}(\T{\D}{\w-1})$ for any (and hence all) clique size(s) $3\le t \le \min(\D,\w-1)$.
\end{definition}

\begin{observation}\label{obs:induction} We will often consider an induced subgraph $H$ of $G$, for instance that induced by the closed neighborhood of an optimal vertex, and think about how it is connected to the rest of the graph. If we are in luck, for every $e \in E(H, G\setminus H)$, $k_t(e) = 0$. We call such a subgraph \emph{detachable} since
\[
	k_t(G) = k_t(H) + k_t(G\setminus H).
\] 
Now if $\rho_t(G\wo H)\le \a$ by induction and $\rho_t(H) \le \a$, then 
\[
    \rho_t(G) = \frac{k_t(G)}{n} = \frac{k_t(H) + k_t(G\wo H)}{n} \le \frac{\a\abs{H} + \a(n-\abs{H})}{n} = \a.
\]
\end{observation}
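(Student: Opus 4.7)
The observation packages two claims that I would prove sequentially. First, if $H$ is a subgraph of $G$ such that $k_t(e) = 0$ for every edge $e \in E(H, G \wo H)$, then $k_t(G) = k_t(H) + k_t(G \wo H)$. Second, given the additional hypotheses $\rho_t(H) \le \a$ and $\rho_t(G \wo H) \le \a$, one concludes $\rho_t(G) \le \a$. The plan is to establish the additive identity by a direct counting argument and then deduce the density bound by elementary arithmetic.

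For the additive identity, I will classify each copy of $K_t$ in $G$ according to where its vertex set lies relative to $V(H)$: entirely inside $V(H)$, entirely inside $V(G) \wo V(H)$, or meeting both. Copies of the first type are exactly the $K_t$'s counted by $k_t(H)$, and copies of the second type are exactly the $K_t$'s counted by $k_t(G \wo H)$. The main step is ruling out the third type. Suppose for contradiction that some $K_t$-subgraph contains vertices $u \in V(H)$ and $w \in V(G) \wo V(H)$; because the $K_t$ is complete, $uw$ is an edge and therefore belongs to $E(H, G \wo H)$, and $uw$ is contained in this $K_t$, so $k_t(uw) \ge 1$. This contradicts the detachability hypothesis. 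Summing the two surviving types yields the claimed identity.

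For the density bound, I will substitute $k_t(H) \le \a \abs{V(H)}$ and $k_t(G \wo H) \le \a (n - \abs{V(H)})$ into the identity and divide by $n$, matching exactly the displayed chain in the statement. I do not anticipate any substantive obstacle: the content of the observation is the exclusion of straddling $K_t$'s, and the short argument \emph{``a straddling $K_t$ forces a crossing edge of positive $t$-weight''} is all that is needed, valid whenever $t \ge 2$.
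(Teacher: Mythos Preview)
Your proposal is correct and matches the paper's approach: the paper states this as an observation with the density computation displayed inline and leaves the additive identity $k_t(G) = k_t(H) + k_t(G\wo H)$ as immediate from the hypothesis that every cross-edge has $t$-weight zero. Your classification of $K_t$'s by where their vertex sets lie, together with the one-line argument that a straddling $K_t$ would force a cross-edge of positive $t$-weight, is exactly the implicit reasoning the paper is invoking.
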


The following definition describes the places in $H$ where there might be edges to $G\wo H$.

\begin{definition}
	Given a graph $G \in \cG(\D,\w)$ and an induced subgraph $H$ of $G$, we say that $v\in V(H)$ is a \emph{border vertex of $H$} if $d_H(v)<\D$. If $v$ is a border vertex we write $\dx(v)$ for the number of \emph{cross-edges} from $v$ to $G\wo H$; i.e. $\abs{N(v) \wo H}$.
\end{definition}

Bounds on the number of cross-edges at border vertices allow us to show a subgraph $H$ is detachable.

\begin{lemma}\label{lem:detach} For an induced subgraph $H$ of $G$, let $B$ be the subgraph of $G$ induced by the border vertices of $H$. Let $i = \w(B)$. Suppose each border vertex $v$ has $\dx(v)\le j$. Then for $t > i+j$, $H$ is detachable. If, in addition, each $i$-clique of $B$ has some vertex $v$ with $\dx(v)<j$, then for $t > i+j-1$, $H$ is detachable.
\end{lemma}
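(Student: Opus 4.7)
My plan is to argue by contradiction. Suppose $H$ is not detachable; then by the definition preceding the lemma, some cross-edge has nonzero $t$-weight, so some copy of $K_t$ in $G$ uses vertices on both sides of the partition. Write its vertex set as a disjoint union $S_1 \sqcup S_2$ with $\0 \ne S_1 \of V(H)$ and $\0 \ne S_2 \of V(G) \wo V(H)$. The goal is then to bound $|S_1|$ and $|S_2|$ separately using the two hypotheses of the lemma.

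The first key step is to check that every $v \in S_1$ is a border vertex of $H$: because $v$ is adjacent to each element of $S_2$, we have $\dx(v) \ge 1$, and then $d_G(v) \le \D$ forces $d_H(v) < \D$. Since $S_1$ induces a clique in $G$ all of whose vertices lie in $B$, it induces a clique in $B$, yielding $|S_1| \le \w(B) = i$. On the other side, each $v \in S_1$ satisfies $S_2 \of N(v) \wo V(H)$, so the cross-degree bound gives $|S_2| \le \dx(v) \le j$.

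Combining these gives $t = |S_1| + |S_2| \le i+j$, which contradicts $t > i+j$ and establishes the first assertion. For the sharpened statement, the only way the inequality could be tight is if $|S_1| = i$ and $|S_2| = j$, in which case $S_1$ is an $i$-clique of $B$; the extra hypothesis then produces some $v \in S_1$ with $\dx(v) < j$, contradicting $|S_2| \le \dx(v)$. Hence $H$ is detachable as soon as $t > i+j-1$. I don't anticipate a real obstacle here: the hypotheses $\w(B) \le i$ and $\dx(v) \le j$ are tailored precisely to control the two halves of a hypothetical cross-clique.
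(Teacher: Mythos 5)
Your proposal is correct and follows essentially the same argument as the paper: any $K_t$ meeting both $H$ and $G\wo H$ can intersect $H$ only in border vertices (by the degree bound $\D$), so it has at most $i$ vertices in $B$ and at most $j$ vertices outside $H$ adjacent to any fixed border vertex in it, with the refined hypothesis ruling out the equality case $|S_1|=i$, $|S_2|=j$. No gaps; this matches the paper's proof of Lemma~\ref{lem:detach}.
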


\begin{proof} Suppose $v$ is a border vertex of $H$. Any $K_t$ containing both $v$ and a vertex of $G\wo H$ must meet $H$ only in vertices of $B$, since the non-border vertices of $H$ cannot have neighbors outside $H$. There are at most $i+j$ possible vertices in any $K_t$ that contains both $v$ and a vertex outside $H$: at most $i$ vertices in a largest clique in $B$, and at most $j$ vertices outside $H$ that are adjacent to $v$. 
	
	If, in addition, each $i$-clique of $B$ has some vertex with at most $j-1$ cross-edges, then the largest size clique that can contain a cross-edge is at most $i+j-1$: either at most $i$ vertices in $B$ with at most $j-1$ vertices outside $H$, or at most $i-1$ vertices in $B$ with at most $j$ vertices outside $H$.
\end{proof}

Another technique we will use is to translate bounds on all the $k_t(v)$'s in a graph $G$ into bounds on $k_t(G)$. The following lemma makes this explicit.

\begin{lemma}\label{lem:ktvbound}
	For any graph $G$ if $k_t(v) \le m$ for all $v\in V(G)$ then $\rho_t(G) \le m/t$.
\end{lemma}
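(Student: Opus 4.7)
The plan is to prove this by a standard double-counting argument on the incidence between vertices of $G$ and the $K_t$'s of $G$. The key identity is that summing the $t$-weight over all vertices counts each $K_t$ exactly $t$ times, since a $K_t$ contains exactly $t$ vertices and contributes $1$ to $k_t(v)$ for each of them. This gives
\[
    \sum_{v \in V(G)} k_t(v) = t \cdot k_t(G).
\]
From the hypothesis $k_t(v) \le m$ for every $v$, the left-hand side is at most $m \cdot |V(G)|$, so dividing by $t \cdot |V(G)|$ yields $\rho_t(G) \le m/t$. This is essentially the entire proof; there is no real obstacle to overcome, as the only subtlety is the bookkeeping identity above, which follows directly from the definition of $k_t(v)$.
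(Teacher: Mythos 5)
Your proof is correct and is exactly the paper's argument: double-count the pairs $(v,K)$ with $K$ a $K_t$ containing $v$ to get $\sum_{v\in V(G)} k_t(v) = t\,k_t(G)$, then bound the sum by $m\abs{V(G)}$ and divide. Nothing further is needed.
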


\begin{proof}
	By counting in two ways the pairs $(v, K)$ consisting of a vertex $v$ and a subset $K\of V(G)$ containing $v$ inducing a complete graph of size $t$, we obtain 
	\[
		tk_t(G) = \sum_{v \in V(G)} k_t(v) \le m\abs{V(G)}. \qedhere
	\]
\end{proof}

\section{Bounds on $\ftdwdef$ and Tur\'an Graphs}\label{sec:bounds}

For every $t$, $\D$, and $\w$, there is a Tur\'an graph $G$ with $\D(G) \le \D$ and $\w(G) \le \w$ that gives a lower bound on $\ftdwdef$. A different Tur\'an graph bounds the number of $k_{t-1}$'s at each vertex, giving an upper bound on $\ftdwdef$. We use these bounds to determine the asymptotic behavior of $\ftdwdef$ for fixed $t$ and $\w$ as $\D\to \infty$. When $\w-1$ divides $\D$, we determine $\ftdwdef$ exactly and give the extremal graphs.

We will use the following count of $K_t$'s in Tur\'an graphs for both bounds.

\begin{lemma}\label{lem:count}The number of copies of $K_t$ in the Tur\'an graph $\T{n}{r}$ is 
	\[
		k_t(\T{n}{r}) = \sum_{k=0}^c\binom{c}{k}\binom{r-c}{t-k} (q+1)^k q^{t-k},
	\]
where $n = qr+c$ and $0 \le c < r$.
\end{lemma}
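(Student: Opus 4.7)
The plan is to observe that the Turán graph $\T{n}{r}$ with $n = qr + c$ is a complete multipartite graph whose parts partition into two classes: $c$ ``large'' parts of size $q+1$ and $r-c$ ``small'' parts of size $q$. Since the graph is complete multipartite, any set of vertices chosen from distinct parts is a clique, and any clique meets each part in at most one vertex. Thus the $K_t$'s in $\T{n}{r}$ correspond bijectively to pairs consisting of a choice of $t$ parts together with a choice of one vertex in each of the chosen parts.

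I would then partition the copies of $K_t$ according to $k$, the number of chosen parts that are large. First, fix $k$ with $0 \le k \le c$; pick the $k$ large parts in $\binom{c}{k}$ ways and the remaining $t-k$ (necessarily small) parts in $\binom{r-c}{t-k}$ ways. Having chosen the parts, each large part contributes $q+1$ vertex-choices and each small part contributes $q$, giving $(q+1)^k q^{t-k}$ transversals. Multiplying and summing over $k$ yields
\[
    k_t(\T{n}{r}) = \sum_{k=0}^{c} \binom{c}{k}\binom{r-c}{t-k}(q+1)^k q^{t-k},
\]
with the convention that $\binom{r-c}{t-k} = 0$ whenever $t-k > r-c$ (so terms with too few small parts vanish automatically, and the upper limit $c$ is safe even when $t > c$).

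There is essentially no obstacle here: the only thing to be careful about is the range of summation. One could equally write the sum from $k = 0$ to $t$ or from $k = \max(0, t-(r-c))$ to $\min(t,c)$; all conventions agree because the binomial coefficients are zero outside the effective range. I would state the formula as given and remark that the restriction $k \le c$ in the displayed sum is simply the convenient truncation point beyond which $\binom{c}{k}$ vanishes.
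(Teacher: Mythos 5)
Your proof is correct and is essentially the same argument as the paper's (which is stated in one line): classify the copies of $K_t$ by the number $k$ of vertices lying in parts of size $q+1$, choose the parts, then choose one vertex per part. Your additional remarks about the summation range and vanishing binomial coefficients are fine but not needed.
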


\begin{proof}
The graph $\T{n}{r}$ has $r$ parts, $c$ of size $q+1$ and $r-c$ of size $q$. The terms in the sum count the number of $K_t$'s having $k$ vertices in parts of size $q+1$.
\end{proof}

\begin{definition}
Given $\D$ and $\w$, we define the \emph{lower bound graph} $L(\D,\w)$ as follows. First, we define $a$ and $b$ by 
\begin{equation}\label{eq:ab}
	\D = a(\w-1)+b \qquad \text{and} \qquad 0 \le b < \w-1.
\end{equation}

We let $L(\D,\w)$ be $\T{\D+a}\w$, which we will show belongs to $\cgdw$. Throughout this section we will use $a$ and $b$ as defined in (\ref{eq:ab}).
\end{definition}

It is often, but not always, the case that the lower bound graph attains $\ftdwdef$. Of course, it always serves to give us a lower bound.

\begin{lemma}\label{lem:lower} For all $t,\D, \w\ge 2$ we have
\[
    \ftdwdef \ge \rho_t\parens{\T{\D+a}{\w}} 
			  = \frac{1}{a\w+b}\sum_{k=0}^b\binom{b}{k}\binom{\w-b}{t-k}(a+1)^ka^{t-k}. 
\]
\end{lemma}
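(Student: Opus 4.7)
The plan is straightforward: show that the lower bound graph $L(\D,\w) = \T{\D+a}{\w}$ actually lies in $\cG(\D,\w)$, and then evaluate its $t$-clique density directly using Lemma \ref{lem:count}.

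First I would unpack the structure of $\T{\D+a}{\w}$. Writing $\D+a = a(\w-1) + b + a = a\w + b$, we see that $\T{\D+a}{\w}$ has $\w$ parts, of which $b$ have size $a+1$ and $\w-b$ have size $a$. A vertex in a part of size $a+1$ has degree $(a\w+b) - (a+1) = a(\w-1) + b - 1 = \D - 1$, while a vertex in a part of size $a$ has degree $a(\w-1) + b = \D$. Thus $\D(L(\D,\w)) = \D$. Since $\D + a \ge \w$ in the nontrivial range, the clique number is $\w$. Hence $L(\D,\w) \in \cG(\D,\w)$, and in particular $\widetilde{f}_t(\D,\w) \ge \rho_t(L(\D,\w))$ by the definition of $\widetilde{f}_t$ as a supremum.

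It remains to compute $\rho_t(L(\D,\w))$. I would invoke Lemma \ref{lem:count} with $n = \D+a = a\w + b$, $r = \w$, $q = a$, and $c = b$ (all of which satisfy the required constraints $0 \le c < r$). This gives
\[
    k_t(\T{\D+a}{\w}) = \sum_{k=0}^{b} \binom{b}{k}\binom{\w-b}{t-k}(a+1)^k a^{t-k}.
\]
Dividing by $|V(L(\D,\w))| = \D+a = a\w + b$ yields exactly the claimed formula for the lower bound.

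There is no real obstacle here: once the parameters are matched to Lemma \ref{lem:count}, the identity is immediate. The only mildly subtle point is keeping straight that, under the parametrization $\D = a(\w-1) + b$, the total vertex count $\D + a$ decomposes cleanly as $a\w + b$ with $0 \le b < \w - 1 < \w$, so that Lemma \ref{lem:count} applies with $c = b$ and the part sizes in $\T{\D+a}{\w}$ are $a$ and $a+1$ as required for the degree check.
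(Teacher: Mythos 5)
Your proof is correct and follows essentially the same route as the paper: verify that $\T{\D+a}{\w}$ lies in $\cG(\D,\w)$ by checking the degrees (maximum degree $\D$, attained in the parts of size $a$) and the clique number, then apply Lemma \ref{lem:count} with $r=\w$, $q=a$, $c=b$ and divide by $\D+a = a\w+b$. Your version is if anything slightly more explicit about matching the parameters of Lemma \ref{lem:count}, but there is no substantive difference.
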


\begin{proof} Set $n=a\w+b = \D+a$ and let $G=\T{n}{\w}$. Clearly $\w(G) \le \w$. The maximum degree of $G$ is achieved by any vertex in a part of size $a$, and is
\[
    \D(G) = n-a = \D.
\]
Therefore $G \in \cG(\D,\w)$ and $\ftdwdef \ge \rho_t(G)$. Lemma \ref{lem:count} with $r = \w$ gives 
\[
    k_t(G) = \sum_{k=0}^b\binom{b}{k}\binom{\w-b}{t-k}(a+1)^k(a)^{t-k}. \qedhere
\]

\end{proof}

For the upper bound on $\ftdwdef$, we will use the upper bound on the number of $K_t$'s at a single vertex.

\begin{lemma}\label{lem:upper} For all $t,\D, \w\ge 2$ we have
\[
    \ftdwdef \le \frac{1}{t}\, k_{t-1}(\T{\D}{\w-1}) =                     
                    \frac{1}{t}\sum_{k=0}^b\binom{b}{k}\binom{\w-1-b}{t-1-k}(a+1)^ka^{t-1-k}.
\]
\end{lemma}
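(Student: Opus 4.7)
The proof I have in mind is essentially a direct composition of the three preceding lemmas, so I expect no real obstacle. Here is the plan.

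First, I would invoke Lemma~\ref{lem:perfectvx}: for any graph $G \in \cgdw$ and any vertex $v \in V(G)$, we have $k_t(v) \le k_{t-1}(\T{\D}{\w-1})$. Set $m = k_{t-1}(\T{\D}{\w-1})$; this bound holds uniformly over all vertices of every $G \in \cgdw$. Then I would apply Lemma~\ref{lem:ktvbound} to conclude that
\[
    \rho_t(G) \le \frac{m}{t} = \frac{1}{t}\,k_{t-1}(\T{\D}{\w-1})
\]
for every $G \in \cgdw$. Since $\ftdwdef$ is the supremum (equivalently, limit) of $k_t(n,\D,\w)/n$, and each graph in $\cgdw$ satisfies the above inequality, taking the supremum over $n$ preserves it and yields $\ftdwdef \le \frac{1}{t}\,k_{t-1}(\T{\D}{\w-1})$.

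It remains to identify $k_{t-1}(\T{\D}{\w-1})$ with the closed-form sum. I would apply Lemma~\ref{lem:count} with $r = \w-1$ and $n = \D$. Using the definition $\D = a(\w-1) + b$ with $0 \le b < \w-1$ from~(\ref{eq:ab}), the parameters $(q,c)$ of Lemma~\ref{lem:count} become $(a,b)$, so
\[
    k_{t-1}(\T{\D}{\w-1}) = \sum_{k=0}^{b}\binom{b}{k}\binom{\w-1-b}{t-1-k}(a+1)^k a^{t-1-k}.
\]
Dividing by $t$ gives the stated expression.

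The only minor thing to be careful about is the bookkeeping when $b = 0$ (so $\w-1$ divides $\D$), in which case the sum collapses to the single term $k=0$ and reads $\binom{\w-1}{t-1}a^{t-1}$; this is consistent with the lower bound in Lemma~\ref{lem:lower} specialized to $b=0$, foreshadowing the exact determination of $\ftdwdef$ promised in the introduction when $\w-1 \mid \D$. Otherwise, no case analysis or nontrivial estimation is needed.
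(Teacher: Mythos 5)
Your proposal is correct and matches the paper's proof exactly: bound each $k_t(v)$ via Lemma~\ref{lem:perfectvx}, average via Lemma~\ref{lem:ktvbound}, pass to the supremum defining $\ftdwdef$, and evaluate $k_{t-1}(\T{\D}{\w-1})$ with Lemma~\ref{lem:count} using $(q,c)=(a,b)$. No gaps; the remark about $b=0$ is fine but not needed.
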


\begin{proof}
    By Lemma \ref{lem:perfectvx}, if $G\in\cgdw$ then for every vertex $v$ we have 
\[
    k_t(v)\le k_{t-1}(\T{\D}{\w-1}).
\]
Applying Lemma \ref{lem:ktvbound} and Lemma \ref{lem:count} we get
\[
    \rho_t(G) \le \frac{1}{t}\, k_{t-1}(\T{\D}{\w-1}) 
			  = \frac{1}{t} \sum_{k=0}^b\binom{b}{k}\binom{\w-1-b}{t-1-k}(a+1)^k(a)^{t-1-k}. \qedhere
\]
\end{proof}

As an immediate consequence, we have the following exact bounds.

\begin{theorem}\label{thm:div} 
	For all $2\le t\le \w$, if  $\w-1$ divides $\D$, then 
	\[
	    \ftdwdef = \rho_t\parens{L(\D,\w)}.
	\]
\end{theorem}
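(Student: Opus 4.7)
The plan is to sandwich $\ftdwdef$ between the lower bound of Lemma \ref{lem:lower} and the upper bound of Lemma \ref{lem:upper}, and verify that under the divisibility hypothesis they coincide. Under the assumption $(\w-1)\mid\D$, the constants $a,b$ defined by (\ref{eq:ab}) satisfy $b=0$ and $\D=a(\w-1)$. Consequently, both sums in Lemmas \ref{lem:lower} and \ref{lem:upper} collapse to their $k=0$ terms, since the binomial coefficient $\binom{b}{k}=\binom{0}{k}$ vanishes for $k>0$.

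With this simplification in hand, I would compute the lower bound as
\[
    \rho_t\parens{L(\D,\w)} = \rho_t\parens{\T{\D+a}{\w}} = \frac{1}{a\w}\binom{\w}{t}a^t = \frac{1}{\w}\binom{\w}{t}a^{t-1},
\]
reflecting the fact that $L(\D,\w)$ is the balanced complete $\w$-partite graph with each part of size $a$. Similarly, the upper bound becomes
\[
    \frac{1}{t}\,k_{t-1}(\T{\D}{\w-1}) = \frac{1}{t}\binom{\w-1}{t-1}a^{t-1},
\]
since $\T{\D}{\w-1}$ is balanced with $\w-1$ parts of size $a$. A one-line check using the standard identity $t\binom{\w}{t} = \w\binom{\w-1}{t-1}$ shows these two expressions are equal, so $\rho_t(L(\D,\w))\le\ftdwdef\le\rho_t(L(\D,\w))$ and equality holds throughout.

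There is no real obstacle here: the statement is essentially a corollary that the two bounds from the previous section match exactly in the divisible case. The hypothesis $t\le\w$ is used only to guarantee that the extracted $k=0$ term is nonzero (otherwise the binomial $\binom{\w}{t}$ or $\binom{\w-1}{t-1}$ vanishes and the result is the trivial equality $0=0$). I would close by remarking that this pins down the extremal graph as $L(\D,\w)$ in the sense that the lower bound construction realizes the supremum, though uniqueness is not asserted.
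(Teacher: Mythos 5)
Your proposal is correct and matches the paper's proof essentially verbatim: both sandwich $\ftdwdef$ between Lemma \ref{lem:lower} and Lemma \ref{lem:upper}, observe that $b=0$ and $a=\D/(\w-1)$ in the divisible case, and check via the identity $\frac{1}{\w}\binom{\w}{t}=\frac{1}{t}\binom{\w-1}{t-1}$ that the two bounds coincide. No issues.
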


\begin{proof}
	When $\w-1$ divides $\D$, we have $b=0$ and $a=\D/(\w-1)$, so by Lemmas \ref{lem:lower} and \ref{lem:upper} we get 
	\[
		\frac{1}{a\w}\binom{\w}{t}a^t = \rho_t\parens{\T{\D+a}{\w}}  \le \ftdwdef \le \frac{1}{t}\, k_{t-1}(\T{\D}{\w-1}) = \frac{1}{t}\binom{\w-1}{t-1}a^{t-1}. 
	\]
    Since the two ends of this string of inequalities agree we have equality throughout.
\end{proof}

As $\D\to\infty$, the upper and lower bounds from the previous two lemmas agree asymptotically.

\begin{theorem}\label{thm:asym}
	For fixed $t$ and $\w$, and $\D\to\infty$, 
	\[
		\ftdwdef = (1+o_\D(1))\rho_t\parens[\Big]{\T[\big]{\D+a}{\w}} 
				 = (1+o_\D(1))\, \frac1{t}  \binom{\w-1}{t-1} \parens[\Big]{\frac{\D}{\w-1}}^{t-1}.
	\]
\end{theorem}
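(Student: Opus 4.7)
The plan is to sandwich $\ftdwdef$ between the two bounds already established in Lemmas \ref{lem:lower} and \ref{lem:upper} and show they both have the same leading-order behavior as $\D\to\infty$. Since $t$ and $\w$ are fixed while $\D\to\infty$, the parameter $a = \lfloor \D/(\w-1)\rfloor$ tends to infinity while $b$ ranges over the bounded set $\{0,1,\dots,\w-2\}$. In particular, $a = (1+o_\D(1))\,\D/(\w-1)$, so any quantity shown to be $(1+o_\D(1))\,C\,a^{t-1}$ is automatically $(1+o_\D(1))\,C\,(\D/(\w-1))^{t-1}$.

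I would first handle the upper bound. In the expression
\[
\frac{1}{t}\sum_{k=0}^b\binom{b}{k}\binom{\w-1-b}{t-1-k}(a+1)^k a^{t-1-k}
\]
each factor $(a+1)^k a^{t-1-k}$ equals $a^{t-1}(1+o_\D(1))$ uniformly in $k$ because $t-1$ is fixed and $k\le b \le \w-2$ is bounded. Pulling this factor out and applying Vandermonde's identity $\sum_k \binom{b}{k}\binom{\w-1-b}{t-1-k} = \binom{\w-1}{t-1}$ yields
\[
\frac{1}{t}\,k_{t-1}\parens{\T{\D}{\w-1}} = (1+o_\D(1))\,\frac{1}{t}\binom{\w-1}{t-1}a^{t-1}.
\]

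For the lower bound I proceed identically inside the sum from Lemma \ref{lem:lower}: each $(a+1)^k a^{t-k}$ equals $a^t(1+o_\D(1))$, and Vandermonde gives $\sum_k \binom{b}{k}\binom{\w-b}{t-k} = \binom{\w}{t}$, so $k_t(\T{\D+a}{\w}) = (1+o_\D(1))\binom{\w}{t}a^t$. Dividing by $|V| = a\w+b = a\w(1+o_\D(1))$ and using the identity $\frac{1}{\w}\binom{\w}{t} = \frac{1}{t}\binom{\w-1}{t-1}$ shows that $\rho_t(\T{\D+a}{\w})$ has the same leading-order expression as the upper bound. Sandwiching $\ftdwdef$ between these two asymptotically equal quantities and substituting $a = (1+o_\D(1))\,\D/(\w-1)$ finishes the proof.

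There isn't really a substantive obstacle here: the work is purely bookkeeping of the error term $(a+1)^k/a^k = 1 + o_\D(1)$ inside a bounded sum, together with the two Vandermonde identities and the $\frac{1}{\w}\binom{\w}{t} = \frac{1}{t}\binom{\w-1}{t-1}$ identity that reconciles the two leading constants. The only mild care needed is to ensure that the $o_\D(1)$ error is uniform across the $O(1)$ many summands, which is immediate since both $t$ and $\w$ (and hence the summation range) are fixed.
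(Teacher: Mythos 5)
Your proposal is correct and follows essentially the same route as the paper: sandwich $\ftdwdef$ between the bounds of Lemmas \ref{lem:lower} and \ref{lem:upper}, replace $(a+1)^k a^{\cdot}$ by the appropriate power of $a \approx \D/(\w-1)$ uniformly over the boundedly many summands, collapse each sum via Vandermonde, and reconcile the constants with $\frac{1}{\w}\binom{\w}{t} = \frac{1}{t}\binom{\w-1}{t-1}$. The only difference is cosmetic (explicit $o_\D(1)$ bookkeeping versus the paper's limit-of-ratios computation), so nothing further is needed.
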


\begin{proof}
	Lemmas \ref{lem:lower} and \ref{lem:upper} together give lower and upper bounds on $\ftdwdef$. Let $a,b$ be defined as in equation (\ref{eq:ab}) as functions of $\D$ and $\w$. As $\D\to\infty$, our upper bound (divided by $\D^{t-1}$) tends to
\begin{align*}
    \lim_{\D\to\infty} \frac{1}{\D^{t-1}}\frac{1}{t} \,k_{t-1}(\T{\D}{\w-1}) 
        &= \lim_{\D\to\infty} \frac{1}{\D^{t-1}} \frac{1}{t} \sum_{k=0}^b \binom{b}{k} \binom{\w-1-b}{t-1-k} 
                                                                        (a+1)^ka^{t-1-k}\\
        &= \lim_{\D\to\infty} \frac{1}{\D^{t-1}} \frac{1}{t} \sum_{k=0}^b \binom{b}{k} \binom{\w-1-b}{t-1-k}            
                                                                        \parens[\bigg]{\frac{\D}{\w-1}}^{t-1}\\
        &= \parens[\bigg]{\frac{1}{\w-1}}^{t-1}\; \frac{1}{t} \binom{\w-1}{t-1}.
    \end{align*}
On the other hand the limiting ratio of the lower bound is
\begin{align*}
    \lim_{\D\to\infty} \frac{1}{\D^{t-1}} \rho_t\parens[\Big]{\T[\big]{\D+a}{\w}} 
        &= \lim_{\D\to\infty} \frac{1}{\D^{t-1}} \frac{1}{\D+a} 
                \sum_{k=0}^b \binom{b}{k} \binom{\w-b}{t-k} (a+1)^k  a^{t-k} \\
        &= \lim_{\D\to\infty} \frac{1}{\D^{t-1}} \frac{\w-1} {\D\w}
                \sum_{k=0}^b\binom{b}{k}\binom{\w-b}{t-k} \parens[\bigg]{\frac{\D}{\w-1}}^t \\
        &= \parens[\bigg]{\frac{1}{\w-1}}^{t-1} \; \frac{1}{\w} \binom{\w}{t} . 
\end{align*}

Since $\frac{1}{\w} \binom{\w}{t} = \frac{1}{t} \binom{\w-1}{t-1}$ the limits agree.
\end{proof}

Sufficiently small graphs $G$ cannot have greater $K_t$ density than the lower bound graph $L(\D,\w)$. To prove this we will use the following lemma that $\rho_t(\T{n}\w)$ weakly increases with $n$.

\begin{lemma}\label{lem:rhoincr}
	For given $t$, $\w$, and $n \le m$, $\rho_t(\T{n}\w) \le \rho_t(\T{m}\w)$.
\end{lemma}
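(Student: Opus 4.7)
The plan is to reduce by induction on $m - n$ to the single-step claim $\rho_t(\T{n}{\w}) \le \rho_t(\T{n+1}{\w})$ for every $n$. Write $n = q\w + c$ with $0 \le c < \w$, so $\T{n}{\w}$ has $c$ parts of size $q+1$ and $\w - c$ parts of size $q$; $\T{n+1}{\w}$ is then obtained by adjoining a new vertex $v$ to a smallest part. A short case check (on whether $c < \w - 1$ or $c = \w - 1$) shows that in either situation the part of $v$ in $\T{n+1}{\w}$ has size $q+1$ and the remaining $\w - 1$ parts together form $\T{n - q}{\w - 1}$. Consequently
\[
    k_t(\T{n+1}{\w}) - k_t(\T{n}{\w}) = k_t(v) = k_{t-1}(N(v)) = k_{t-1}(\T{n-q}{\w-1}).
\]

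Given this identity, the desired inequality $(n+1)\,k_t(\T{n}{\w}) \le n\cdot k_t(\T{n+1}{\w})$ rearranges to
\[
    k_t(\T{n}{\w}) \le n \cdot k_{t-1}(\T{n - q}{\w - 1}).
\]
I would prove this using Lemma \ref{lem:ktvbound}. Any vertex $u$ of $\T{n}{\w}$ lies in a part of size $q_u \in \{q, q+1\}$, so $N(u) \iso \T{n - q_u}{\w - 1}$, which is an induced subgraph of $\T{n - q}{\w - 1}$ (obtained by removing $q_u - q \in \{0,1\}$ vertices from a largest part). Hence $k_t(u) = k_{t-1}(N(u)) \le k_{t-1}(\T{n - q}{\w - 1})$, and Lemma \ref{lem:ktvbound} gives $k_t(\T{n}{\w}) \le (n/t)\, k_{t-1}(\T{n - q}{\w - 1})$, which is stronger than what is needed and closes the induction.

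The argument is essentially a single double-counting step packaged through Lemma \ref{lem:ktvbound}. The only mildly fiddly piece is verifying $N(v) \iso \T{n-q}{\w-1}$ for the newly added vertex, in particular checking the boundary case $c = \w - 1$ where adjoining $v$ shifts which parts count as ``smallest''; once that identification is in hand, the rest of the proof is automatic.
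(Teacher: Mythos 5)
Your proposal is correct and is essentially the paper's argument: both proofs add one vertex $v$ at a time with $\T{n+1}{\w} - v \iso \T{n}{\w}$, identify $k_t(v) = k_{t-1}(\T{n-q}{\w-1})$, and observe that this dominates the average $t$-weight of $\T{n}{\w}$, which forces $\rho_t$ to weakly increase. The only cosmetic difference is that you bound the average weight by routing it through Lemma \ref{lem:ktvbound} (every neighborhood embeds in $\T{n-q}{\w-1}$), whereas the paper computes the two possible vertex weights $k_{t-1}(\T{n-q-1}{\w-1})$ and $k_{t-1}(\T{n-q}{\w-1})$ explicitly and averages them.
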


\begin{proof}
	Let $n = q\w+c$ with $0 \le c \le \w-1$. Observe that for an $n$-vertex graph $G$, $\rho_t(G)$ increases with $n$ if and only if $t\rho_t(G) = \frac{1}{n}\sum_{v\in V(G)}k_t(v)$, the average $t$-weight, does. In $\T{n}\w$, there are $c(q+1)$ vertices with $t$-weight $k_t(v) = k_{t-1}(\T{n-q-1}{\w-1})$ and $(\w-c)q$ vertices with $t$-weight $k_t(v) = k_{t-1}(\T{n-q}{\w-1})$. Note that $k_{t-1}(\T{n-q-1}{\w-1}) \le k_{t-1}(\T{n-q}{\w-1})$. Let 
	\[
		\alpha = (c(q+1)k_{t-1}(\T{n-q-1}{\w-1}) + (\w-c)qk_{t-1}(\T{n-q}{\w-1}))/n = t\rho_t(\T{n}\w);
	\]since $\alpha$ is the average $t$-weight, we have $\alpha \le k_{t-1}(\T{n-q}{\w-1})$. Let $v$ be a vertex in $\T{n+1}\w$ such that $\T{n+1}\w - v \iso \T{n}\w$. Then $k_t(v) = k_{t-1}(\T{(n+1)-(q+1)}{\w-1}) \ge \alpha$. The other vertices of $\T{n+1}\w$ have at least as many $K_t$'s in $\T{n+1}\w$ as in $\T{n+1}\w - v \iso \T{n}\w$. Therefore $t\rho_t(\T{n}\w) \le t\rho_t(\T{n+1}\w)$, and $\rho_t(\T{n}\w) \le \rho_t(\T{m}\w)$.
\end{proof}

\begin{lemma}\label{lem:basecase}
	For an $n$-vertex graph $G \in \cgdw$ with $n \le \D + a$ and $t \ge 2$, $\rho_t(G) \le \rho_t(L(\D,\w))$.
\end{lemma}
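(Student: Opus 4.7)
The plan is to observe that the lemma follows almost immediately from Zykov's theorem together with Lemma \ref{lem:rhoincr}. The key point is that although the hypothesis $G \in \cgdw$ provides both a clique bound and a degree bound, for this particular lemma the degree bound is only being used to guarantee $n \le \D + a$; once that is in hand, we may ignore the degree constraint entirely.

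Concretely, I would argue as follows. Since $G$ has $n$ vertices and $\w(G) \le \w$, Zykov's theorem (Theorem \ref{thm:Zykov}) gives
\[
    k_t(G) \le k_t\parens{\T{n}{\w}},
\]
and dividing through by $n$ yields $\rho_t(G) \le \rho_t(\T{n}{\w})$. Now apply Lemma \ref{lem:rhoincr} with $m = \D + a$, which is legitimate because the hypothesis $n \le \D + a$ is exactly what we need: this gives $\rho_t(\T{n}{\w}) \le \rho_t(\T{\D+a}{\w})$. Since $L(\D,\w) = \T{\D+a}{\w}$ by definition, chaining the two inequalities yields
\[
    \rho_t(G) \le \rho_t\parens{\T{\D+a}{\w}} = \rho_t\parens{L(\D,\w)},
\]
which is exactly what we want.

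There is essentially no obstacle here; the entire content of the lemma is that the monotonicity statement of Lemma \ref{lem:rhoincr} combines cleanly with Zykov's theorem. The only point deserving even a moment's thought is verifying that the hypothesis of Lemma \ref{lem:rhoincr} really only requires $n \le m$ (and not, say, anything about degree or divisibility), so that we may plug in $m = \D + a$ regardless of whether $\w - 1$ divides $\D$.
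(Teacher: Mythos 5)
Your proposal is correct and matches the paper's proof essentially verbatim: apply Zykov's theorem to bound $k_t(G)$ by $k_t(\T{n}{\w})$, divide by $n$, and then invoke Lemma~\ref{lem:rhoincr} with $m = \D + a$ to conclude $\rho_t(G) \le \rho_t(\T{\D+a}{\w}) = \rho_t(L(\D,\w))$. Your remark that Lemma~\ref{lem:rhoincr} needs only $n \le m$, with no divisibility or degree hypothesis, is accurate.
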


\begin{proof}
	By Zykov's Theorem, $k_t(G) \le k_t(\T{n}{\w})$, and dividing by $n$, $\rho_t(G) \le \rho_t(\T{n}{\w})$. By Lemma \ref{lem:rhoincr}, $\rho(\T{n}\w) \le \rho_t(\T{\D+a}\w) = \rho_t(L(\D,\w))$.
\end{proof}

\section{Computing $\ftdw{t}{r}{r}$}\label{sec:d=w} 

In this section we show that for all $r\in \N$ and $t\ge 3$ the optimal ratio $\ftdw{t}{r}{r}$ is achieved by $\T{r+1}{r} \iso K_{r+1}-e$. We start with several lemmas addressing the case $t=3$. In outline, we'll show that vertices in a graph $G\in \cG(r,r)$ are either optimal, seriously suboptimal, or are contained in a local structure in which the average value of $k_3(v)$ is low. We discuss this borderline case first, which requires a definition, local to Section \ref{sec:d=w}. Later, in Section \ref{sec:other}, we reuse the word configuration to refer to a similar local situation.

\begin{definition}
    If $G\in \cG(r,r)$, $C \of V(G)$ has size $r+1$, and $G[C]$ is complete except for (exactly) two missing edges then we call $C$ a \emph{full configuration} in $G$.
\end{definition}

\begin{lemma}\label{lem:config}
	If $r\ge 6$, $G\in \cG(r,r)$, and $C$ is a full configuration in $G$, then the average 3-weight of vertices in $C$ is at most $\binom{r}2-3$, i.e.,
	\[
		\sum_{v\in C} k_3(v) \le (r+1)\parens[\Big]{\binom{r}2 - 3}.
	\]
	[Here we are computing $k_3(v)$ in $G$, not in $G[C]$.]
\end{lemma}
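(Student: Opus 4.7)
The plan is to decompose the sum as $\sum_{v \in C} k_3(v) = 3T_3 + 2T_2 + T_1$, where $T_i$ counts the triangles of $G$ having exactly $i$ vertices in $C$, and then to bound each summand via a case analysis on whether the two missing edges of $C$ share a vertex. A key preliminary observation is that since $|C| = r+1$ and $\D(G) \leq r$, any $v \in C$ has at most $r - d_C(v)$ neighbors outside $C$. In particular, a vertex not incident to either missing edge has $d_C(v) = r$ and hence no outside neighbors, so it contributes only to $T_3$. Consequently only the at most four vertices incident to a missing edge can play any role in $T_2$ or $T_1$.

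First I would compute $T_3 = k_3(G[C])$ by inclusion--exclusion on the two missing edges: starting from $\binom{r+1}{3}$ triangles in $K_{r+1}$, each missing edge destroys $r-1$ of them, and if the two missing edges share a vertex we add back $1$ for the triangle on the three endpoints that would otherwise be subtracted twice. I would then handle the easier shared-vertex case $e_1 = \{a,b\}$, $e_2 = \{a,c\}$: only $a$ has (at most) two potential outside neighbors, so only $a$ can contribute to $T_1$, while $T_2$ contributions come from common outside neighbors at the edges of $G[C]$ within $\{a,b,c\}$, of which only $bc$ is present. Combined with the closed form for $T_3$, these bounds give the required inequality in this case.

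The main obstacle is the disjoint case $e_1 = \{a,b\}$, $e_2 = \{c,d\}$, where four vertices each admit an outside neighbor. Here $T_1 = 0$ automatically (no $C$-vertex has two outside neighbors), and $T_2$ counts coincidences $w_x = w_y$ among the outside neighbors $w_a, w_b, w_c, w_d$ at the four present cross edges $ac, ad, bc, bd$. The natural extremal configuration has a single outside vertex $w$ adjacent to all of $a,b,c,d$ simultaneously, pushing $T_2$ up to $4$. To control this I would invoke the clique constraint $\w(G) \leq r$ applied to $N(w)$, together with the observation that every full vertex $u_i \in C$ already has $d_C(u_i) = r$ and so cannot be adjacent to $w$; thus $N_C(w) \subseteq \{a,b,c,d\}$, which tightly constrains how much the outside contributions can inflate $T_2$.

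The proof then reduces to verifying $3T_3 + 2T_2 + T_1 \leq (r+1)\bigl(\binom{r}{2}-3\bigr)$ case by case; the hypothesis $r \geq 5$ enters at the numerical check in the disjoint case, where the margin between the extremal $T_2$ value and the target bound is smallest.
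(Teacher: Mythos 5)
Your decomposition $\sum_{v\in C}k_3(v)=3T_3+2T_2+T_1$ is essentially a reorganization of the paper's per-vertex bounds, and in the incident case it produces exactly the paper's numbers: $T_3=\binom{r+1}{3}-2(r-1)+1$, $T_2\le 1$, $T_1\le 1$ give $\sum_{v\in C}k_3(v)\le (r+1)\binom{r}{2}-6(r-2)$, which is at most $(r+1)\bigl(\binom{r}{2}-3\bigr)$ precisely for $r\ge 5$. That case is fine.

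The gap is in the disjoint case. You correctly identify the worst configuration (a single outside vertex $w$ joined to all of $a,b,c,d$, giving $T_2=4$), but the proposed remedy --- invoking $\w(G)\le r$ on $N(w)$ together with $N_C(w)\of\{a,b,c,d\}$ --- yields nothing: $\{a,b,c,d\}$ induces a $4$-cycle, so every clique through $w$ has at most $3$ vertices and $d(w)=4\le r$, so this configuration is perfectly legal in $\cG(r,r)$ and $T_2=4$ cannot be excluded. With $T_2=4$ and $T_1=0$ your bound is $3\bigl(\binom{r+1}{3}-2(r-1)\bigr)+8=(r+1)\binom{r}{2}-6r+14$, identical to the paper's Case 2 estimate, and this is at most $(r+1)\bigl(\binom{r}{2}-3\bigr)$ only when $3r\ge 17$, i.e.\ $r\ge 6$, not $r\ge 5$. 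In fact no argument can close the $r=5$ disjoint case: let $G[C]=K_6$ minus the two disjoint edges $ab,cd$ and add one vertex $w$ adjacent to $a,b,c,d$. Then $G\in\cG(5,5)$, the two full vertices have $k_3=8$ and $a,b,c,d$ have $k_3=7$, so $\sum_{v\in C}k_3(v)=44>42=(r+1)\bigl(\binom{r}{2}-3\bigr)$. The paper's own proof of this case is explicitly only ``for $r\ge 6$'' (and Lemma~\ref{lem:config} is invoked only under the hypothesis $r\ge 6$ of Lemma~\ref{lem:t36}), so the concrete defect in your proposal is the claim that the clique constraint rescues the numerical check at $r=5$; it does not, and the disjoint case should instead be asserted and proved only for $r\ge 6$, exactly as the paper does.
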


\begin{proof}
Let $e$ and $f$ be the two edges of $\compl{G[C]}$. 
\begin{case} $e$ and $f$ are incident.

	Let $w$ be the vertex common to $e$ and $f$, and let $x$ and $y$ be the other vertices in $e\cup f$. By the degree condition $w$ has at most two neighbors outside $C$, and each of $x$ and $y$ has at most one neighbor outside $C$. We see that 
	\begin{align*}
	    k_3(w) &\le \binom{r-2}2 + 1 = \binom{r}2 -  2(r-2),\ \text{and} \\
		k_3(x), k_3(y) &\le \binom{r-1}2 + 1 = \binom{r}2 - (r-2).
	\end{align*}
	For the first, note that $w$ is in a triangle with every pair from $C\wo \set{w,x,y}$ and at most one with its external neighbors. The vertex $x$ (and similarly $y$) is in a triangle with each pair from $C\wo \set{x,w}$ and at most one with $y$ and a common exterior neighbor. All $r-2$ remaining vertices in $C$ are in exactly $\binom{r}2 - 2$ triangles. Thus
	\[
		\sum_{v\in C} k_3(v) \le (r+1)\binom{r}2 - 6(r-2) = (r+1)\parens[\Big]{\binom{r}2 - 3} +(15-3r) 
							 \le (r+1)\parens[\Big]{\binom{r}2 - 3}, 
	\]
	for $r\ge 5$.
\end{case}

\begin{case}$e$ and $f$ are not incident.

	Let $e = \set{x,y}$. Then $x$ has at most one neighbor outside $N[v]$ and
	\[
		k_3(x) \le \parens[\Big]{\binom{r-1}2-1} + 2 = \binom{r}2 - (r-2)
	\]
	since $x$ is in a triangle with every pair from $C\wo e$ except for the pair $f$ and also at most two triangles involving its exterior neighbor. The same calculation applies to every vertex in $e\cup f$, and each of the other $r-3$ vertices of $C$ is in exactly $\binom{r}2-2$ triangles. Thus
	\[
		\sum_{v\in C} k_3(v) \le (r+1)\binom{r}2 - 4(r-2) - 2(r-3) = (r+1)\parens[\Big]{\binom{r}2 - 3} +(17-3r) 
							 \le (r+1)\parens[\Big]{\binom{r}2 - 3}, 
	\]
	for $r\ge 6$.
\end{case}
\end{proof}

The following lemma establishes the useful fact that full configurations are either identical or disjoint.

\begin{lemma}\label{lem:disjoint}
    If $r\ge 6$ and $G\in \cG(r,r)$ then any two distinct full configurations in $G$ are disjoint.
\end{lemma}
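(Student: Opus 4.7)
The plan is to suppose for contradiction that $C_1$ and $C_2$ are distinct configurations whose intersection $B := C_1 \cap C_2$ is nonempty, and to derive a contradiction by degree counting. Set $A := C_1 \wo C_2$, $D := C_2 \wo C_1$, and $k := |B|$, so $|A|=|D|=r+1-k$ and $1 \le k \le r$. For $v \in B$ write $b_v := d_{G[B]}(v)$ and let $i_j(v) \in \set{0,1,2}$ be the number of missing edges of $C_j$ incident to $v$, so that $d_{G[C_j]}(v) = r - i_j(v)$. Since $D$ is disjoint from $C_1$,
\[
	d_G(v) \ge d_{G[C_1]}(v) + d_{G[D]}(v) = \bigl(r - i_1(v)\bigr) + \bigl(r - i_2(v) - b_v\bigr),
\]
and combining with $d_G(v) \le r$ and $i_1(v) + i_2(v) \le 4$ gives $b_v \ge r - 4$; since $b_v \le k-1$, this forces $k \ge r - 3$.

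Next I would sum the pointwise inequality $b_v \ge r - i_1(v) - i_2(v)$ over $v \in B$. Each configuration has four missing-edge endpoints in total, so $\sum_{v \in B} i_j(v) \le 4$ for $j=1,2$, and we get $2 e(G[B]) \ge kr - 8$. Combined with $e(G[B]) \le \binom{k}{2}$ this simplifies to $k(r+1-k) \le 8$. Intersecting the constraints $r-3 \le k \le r$ and $k(r+1-k) \le 8$ leaves no admissible $k$ when $r \ge 9$, and only $k = r$ when $r \in \set{6,7,8}$.

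The remaining case is $k = r$, so $A = \set{a}$ and $D = \set{d}$. The missing edges of $G[B]$ can be read off from either configuration, so the number $j$ of missing edges of $C_1$ incident to $a$ equals the corresponding number for $C_2$ at $d$, and the remaining $2-j$ missing edges of each configuration lie in $B$ and coincide. Let $Z \of B$ and $W \of B$ be the non-neighbors of $a$ and of $d$ inside $B$, so that $|Z|=|W|=j$; then $v \sim a$ iff $v \notin Z$ and $v \sim d$ iff $v \notin W$. The degree cap $d_G(v) \le r = k$ then forces every $v \in B$ either to be an endpoint of a missing edge of $G[B]$ or to lie in $Z \cup W$. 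The union of these sets has at most $2(2-j) + 2j = 4$ elements, contradicting $|B| = r \ge 6$.

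The main obstacle is the bookkeeping in the final case: one has to reconcile how the subgraph $G[B]$ sits inside each of $C_1$ and $C_2$, pinning down where each configuration's four missing-edge endpoints live, and then convert the pointwise degree cap into a covering statement on $B$. Everything else reduces cleanly to the two inequalities $k \ge r-3$ and $k(r+1-k) \le 8$ derived from one degree inequality at each $v \in B$.
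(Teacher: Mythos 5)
Your proof is correct. It starts from the same core inequality as the paper's: for a vertex $v$ in the intersection of two configurations, the cap $d_G(v)\le r$ together with the fact that each configuration misses only two edges forces $v$ to have almost all of the intersection as neighbors (your $b_v\ge r-i_1(v)-i_2(v)$ is the paper's $d_G(v)\ge \abs{C\cup D}-1-d_R(v)$ in disguise). The two arguments then finish differently. The paper takes a vertex of the intersection minimizing the number of incident missing edges of $\compl{G[C\cup D]}$ and uses the observation that the missing edges meeting the intersection number at most four, so they cover at most four of its vertices; a short case analysis on that minimum ($\delta_I=0,1,2,3,4$) then rules out everything except $C=D$, uniformly in the size of the intersection. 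You instead sum the pointwise inequality over the intersection to get $k(r+1-k)\le 8$ alongside $k\ge r-3$, which for $r\ge 6$ eliminates every size except $k=r$ (and nothing at all once $r\ge 9$), and you then need a separate, and correct, analysis of the case where the configurations share all but one vertex: matching up the missing edges seen from either side and using the degree cap to show every vertex of $B$ lies in a set of at most $2(2-j)+2j=4$ exceptional vertices, contradicting $\abs{B}=r\ge 6$. The averaging step buys a clean numerical reduction without the slightly delicate ``at most four covered vertices'' count, at the price of the residual $k=r$ case; the paper's extremal-vertex argument avoids any residual case but leans on that sharper covering observation. Both uses of the hypothesis $r\ge 6$ are genuine (your $k=r-1$ case is admissible at $r=5$, matching the paper's restriction).
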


\begin{proof}
    Suppose that full configurations $C$ and $D$ have $C \cap D \neq \0$; we will show that $C=D$. Let $R = \compl{G[C\cup D]}$, $I = \compl{G[C\cap D]}$, and $i = |C\cap D| \ge 1$. Let $v \in C\cap D$ be a vertex with the minimum value of $d_R(v)$ among vertices in $C\cap D$. Let $\delta_I$ be this minimum value. Then 
    \[
    	r \ge d_G(v) \ge \abs{C\cup D} - 1 - \delta_I = (2r+2-i) - 1 - \delta_I,
    \] 
    so $i \ge r+1-\delta_I$. In particular if $\delta_I = 0$ then $C=D$.
    
    We will prove that $\delta_I \le 2$ by first showing that $\delta_I \ge 3$ implies $i = 1$. In $R$ a vertex $v \in I$ has at most two neighbors in $C$ and at most two neighbors in $D$ because they are full configurations. If $d_R(v) \ge 3$, then in $R$, $v$ has a neighbor not in $C$ and a neighbor not in $D$. If $i \ge 2$ and $\delta_I \ge 3$ then $R$ contains $2P_3$ as a subgraph with the two central vertices in $I$ and two endpoints in each of $C\setminus D$ and $D\setminus C$. This $2P_3$ in $R$ has four edges, but $R$ has at most four edges, so $R \iso 2P_3$, contradicting $\delta_I \ge 3$. Thus if $\delta_I \ge 3$, then $1 = i \ge r+1-4 = r-3$, contradicting $r \ge 6$.
	
	Since $R$ has at most $4$ edges, we claim that if $\delta_I \ge 1$, then $i\le 4$. Suppose $\delta_I \ge 1$, i.e. every vertex of $I$ is an endpoint of an edge in $R$. There are exactly two edges of $R$ in each of $C$ and $D$, so we have either (i) exactly two edges in $R$, both in $I$, so $i=3$ or $4$, (ii) exactly three edges in $R$, one in $I$, so $i=2$, $3$, or $4$, or (iii) exactly four edges in $R$, none in $I$, so $i=4$. In all cases $i \le 4$. In summary, $4 \ge i \ge r+1-\delta_I$. If $\delta_I$ is $1$ or $2$ then $4 \ge i \ge r-1$, contradicting $r \ge 6$. This leaves only the possibility that $\delta_I=0$.    
\end{proof}

Now we prove most of the cases when $t=3$.

\begin{lemma}\label{lem:t36}
	For $r \ge 6$, $\ftdw{3}{r}{r} = \rho_3(\T{r+1}{r})$.
\end{lemma}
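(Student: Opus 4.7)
The plan is to prove the upper bound $\rho_3(G) \le \rho_3(\T{r+1}{r})$ for every $G \in \cG(r,r)$ by induction on $n = \abs{V(G)}$; combined with Lemma~\ref{lem:lower} this gives $\ftdw{3}{r}{r} = \rho_3(\T{r+1}{r})$. Write $M = k_2(\T{r}{r-1}) = \binom{r}{2} - 1$, which by Lemma~\ref{lem:perfectvx} is the maximum possible value of $k_3(v)$ in $G$. A direct computation using $\T{r+1}{r} \iso K_{r+1} - e$ gives
\[
    3\rho_3(\T{r+1}{r}) = M - \frac{2(r-2)}{r+1},
\]
a quantity lying strictly between $M-2$ and $M-1$ for all $r \ge 6$. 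The base case $n \le r+1$ is Lemma~\ref{lem:basecase} applied with $L(r,r) = \T{r+1}{r}$.

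For the inductive step I would split on whether $G$ contains a perfect vertex. If some $v \in V(G)$ is perfect, then $H = N[v] \iso \T{r+1}{r}$ because $v$ is joined to all of $N(v) \iso \T{r}{r-1} \iso K_r - e$. The border vertices of $H$ are precisely the two endpoints of the missing edge in $N(v)$: they are non-adjacent and each carries at most one cross-edge, so applying Lemma~\ref{lem:detach} with $i = j = 1$ makes $H$ detachable for every $t \ge 3$. Since $\rho_3(H) = \rho_3(\T{r+1}{r})$ and $G \setminus H \in \cG(r,r)$ is strictly smaller, the inductive hypothesis closes this case.

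Suppose instead that $G$ has no perfect vertex. I would argue that $3\rho_3(G) \le M - 2$, which is strictly less than $3\rho_3(\T{r+1}{r})$ for $r \ge 6$. Let $C_1, \dots, C_k$ be the configurations of $G$; they are pairwise disjoint by Lemma~\ref{lem:disjoint}, and Lemma~\ref{lem:config} gives $\sum_{v \in C_i} k_3(v) \le (r+1)(M-3)$ for each $i$. Set $S = V(G) \setminus \bigcup_i C_i$. The key claim is that every $v \in S$ satisfies $k_3(v) \le M - 2$: if instead $k_3(v) = M-1$, then since $\binom{r-1}{2} < M - 1$ for $r \ge 6$ we must have $\abs{N(v)} = r$, so $N(v)$ is $K_r$ minus two edges and $N[v]$ itself is a configuration containing $v$, contradicting $v \in S$. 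Summing the individual weights yields
\[
    3 k_3(G) \le (M-3)(n - \abs{S}) + (M-2)\abs{S} \le (M-2)\,n,
\]
which is the desired bound.

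The main obstacle I expect is this second case. Configurations are \emph{not} detachable for $t = 3$ via Lemma~\ref{lem:detach}, so I cannot simply strip off each configuration and induct as in the perfect-vertex case. Instead, the argument replaces detachability with averaging: Lemma~\ref{lem:config} absorbs the relevant ``badness'' inside each configuration, while the structural fact that any vertex of weight exactly $M - 1$ must sit inside a configuration pins down the vertices outside. Together these deliver precisely the slack between $M - 2$ and $M - \frac{2(r-2)}{r+1}$ that $r \ge 6$ affords.
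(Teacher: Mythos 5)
Your proposal is correct and follows essentially the same route as the paper: Lemma~\ref{lem:basecase} for small $n$, detaching $N[v]\iso\T{r+1}{r}$ at a perfect vertex via Lemma~\ref{lem:detach}, and otherwise averaging over the pairwise disjoint configurations (Lemmas~\ref{lem:config} and~\ref{lem:disjoint}) together with the observation that any vertex of $3$-weight $\binom{r}{2}-2$ lies in a configuration. One small slip: Lemma~\ref{lem:config} gives $\sum_{v\in C}k_3(v)\le (r+1)(M-2)$, not $(r+1)(M-3)$, but this weaker (correct) bound still yields your final estimate $3k_3(G)\le (M-2)n < 3\rho_3(\T{r+1}{r})\,n$, so nothing breaks.
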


\begin{proof}
First note that $\rho_3(G) \le \rho_3(\T{r+1}r)$ for all graphs $G$ on at most $r+1$ vertices by Lemma \ref{lem:basecase}. 
Suppose then that $G \in \cG(r,r)$ has at least $r+2$ vertices. Consider first the case that there is an optimal vertex $v$ in $G$. Then $N[v] \iso\T{r+1}{r}$, a $K_{r+1}$ minus an edge. There are at most two possible edges between $N[v]$ and $G\setminus N[v]$, at most one incident to each of the non-adjacent vertices, and none at the other vertices of $N[v]$, since $\D(G) \le r$. Thus $N[v]$ is detachable. By Observation \ref{obs:induction} we may apply induction, so $\rho_3(G) \le \rho_3(\T{r+1}{r})$.

Now consider the case where $G$ has no optimal vertex. Every vertex $v$ is either of degree at most $r-1$ or has at least two missing edges in its neighborhood.  Thus for all $v\in V(G)$ we have 
\[
	k_3(v) \le \max\parens[\Big]{\binom{r}{2}-2, \binom{r-1}2} = \binom{r}{2}-2.
\]
If $v$ achieves this upper bound on $k_3(v)$ then $v$ is in a full configuration---$N[v]$ to be precise.
Thus from Lemmas~\ref{lem:config} and \ref{lem:disjoint} we get
\begin{align*}
    \sum_{v\in V(G)} k_3(v) &= \sum_{\text{$v$ not in any full configuration}} k_3(v) + \sum_{\text{$C$ a full configuration}} \sum_{v\in C} k_3(v) \\
							 &\le n(G) \parens[\Big]{\binom{r}2 - 3},
\shortintertext{so}
	\rho_3(G) &\le \frac13\parens[\Big]{\binom{r}{2} -3} = \frac{1}{r+1}\binom{r+1}{3} - 1 \\
			  &< \frac1{r+1}\parens[\Big]{\binom{r+1}{3}-(r-1)}  = \rho_3(\T{r+1}{r}). \qedhere
\end{align*}
\end{proof}

All remaining cases are covered by a uniform argument that is the main focus of the proof of the following theorem.

\begin{theorem}\label{thm:rr}
	For all $3\le t\le r$ we have $\ftdw{t}{r}{r} = \rho_t(\T{r+1}{r})$.
\end{theorem}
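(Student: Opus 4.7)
The plan is to induct on $n = |V(G)|$. Since $\D = \w = r$ gives $a = b = 1$, we have $L(r,r) = \T{r+1}{r}$, so the base case $n \le r+1$ is immediate from Lemma~\ref{lem:basecase}. For $n \ge r+2$, I split on whether $G$ contains a perfect vertex.

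If $G$ has a perfect vertex $v$, then $N[v] \iso \T{r+1}{r}$. The two vertices in the doubled part of $N[v]$ are its only border vertices, each with $\dx \le 1$; since they are non-adjacent, the border subgraph $B$ satisfies $\w(B) = 1$. Lemma~\ref{lem:detach} with $i = j = 1$ and $t > i+j = 2$ then shows $N[v]$ is detachable, giving $k_t(G) = k_t(\T{r+1}{r}) + k_t(G \wo N[v])$. Since $G \wo N[v] \in \cG(r,r)$, induction gives $\rho_t(G) \le \rho_t(\T{r+1}{r})$.

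If $G$ has no perfect vertex, then for $t = 3$ with $r \ge 6$ we invoke Lemma~\ref{lem:t36}. For the remaining cases, namely $t = 3$ with $r \in \{3,4,5\}$ and $t \ge 4$ with $r \ge t$, I plan a uniform pointwise bound via Lemma~\ref{lem:ktvbound}. By Lemma~\ref{lem:perfectvx} and the uniqueness part of Zykov's theorem, every non-perfect vertex $v$ satisfies $k_t(v) \le \binom{r-1}{t-1}$ when $d(v) < r$, and $N(v) \not\iso \T{r}{r-1}$ when $d(v) = r$. The key sharpening I would establish is that in the latter case $k_{t-1}(N(v)) \le k_{t-1}(\T{r}{r-1}) - \binom{r-3}{t-3}$, the value achieved by $\T{r}{r-1} - e$ when $e$ is incident to the doubled pair. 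Combined with Lemma~\ref{lem:count}, the target inequality $\rho_t(G) \le \rho_t(\T{r+1}{r})$ reduces to $(r+1)\binom{r-3}{t-3} \ge 2\binom{r-2}{t-2}$, equivalently $(r-1)(t-2) \ge 2(r-t)$, which holds for all $t \ge 4$ with $r \ge t$ and for $t = 3$ with $r \le 5$.

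The main obstacle is the stability claim that every $r$-vertex graph $H \not\iso \T{r}{r-1}$ with $\w(H) \le r-1$ satisfies $k_{t-1}(H) \le k_{t-1}(\T{r}{r-1}) - \binom{r-3}{t-3}$. Such an $H$ is $K_r$ minus a set $F$ of at least two non-edges, and an inclusion-exclusion on $F$ confirms the bound: two non-edges sharing a vertex yield exactly the claimed equality, two disjoint non-edges give a strictly smaller count via Pascal's identity, and adding further non-edges only reduces $k_{t-1}$ further. Verifying this cleanly and uniformly across all admissible $(t,r)$ is the crux of the proof.
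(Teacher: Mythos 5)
Your proposal is correct and follows essentially the same route as the paper: the base case via Lemma~\ref{lem:basecase}, detaching $N[v]\iso\T{r+1}{r}$ at a perfect vertex and inducting, and otherwise the sharpened pointwise bound $k_t(v)\le k_{t-1}(\T{r}{r-1})-\binom{r-3}{t-3}$ fed into Lemma~\ref{lem:ktvbound}, with Lemma~\ref{lem:t36} covering $t=3$, $r\ge 6$. Your reduction $2\binom{r-2}{t-2}\le(r+1)\binom{r-3}{t-3}$ is equivalent to the paper's condition $4-\tfrac{6}{r+1}\le t\le r+1$, and the only detail left implicit (and easily checked, as the paper does) is that $\binom{r-1}{t-1}\le\binom{r}{t-1}-\binom{r-2}{t-3}-\binom{r-3}{t-3}$, so the vertices of degree less than $r$ also satisfy the uniform bound.
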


\begin{proof}
	First note that $\rho_t(G) \le \rho_t(\T{r+1}r)$ for all graphs $G$ on at most $r+1$ vertices by Lemma \ref{lem:basecase}. Consider then $G \in \cG(r,r)$, a graph on $n$ vertices with $n>r+1$. Suppose first that there is an optimal vertex $v$ in $G$. Then $N[v] \iso\T{r+1}{r}$. As in the proof of Lemma~\ref{lem:t36} $N[v]$ is detachable, so by induction $\rho_t(G)\le \rho_t(\T{r+1}{r})$. 
    
Henceforth we may assume that there is no optimal vertex in $G$. Thus any vertex $v$ with $d(v)=r$ has at least two missing edges in its neighborhood. Thinking of $N(v)$ as $\T{r}{r-1}$ with at least one edge removed, and noting that every edge in $\T{r}{r-1}$ belongs to some copy of $K_{r-1}$ we have
\[
    k_t(v) = k_{t-1}(N(v)) \le k_{t-1}(\T{r}{r-1}) - \binom{r-3}{t-3}= \binom{r}{t-1} - \binom{r-2}{t-3} - \binom{r-3}{t-3}.
\]
On the other hand if $d(v)\le r-1$ then $k_t(v) \le \binom{r-1}{t-1}$. Observing that 
\[
    \binom{r}{t-1} - \binom{r-1}{t-1} = \binom{r-1}{t-2} = \binom{r-2}{t-3} + \binom{r-2}{t-2}
                                      \ge \binom{r-2}{t-3} + \binom{r-3}{t-3}, 
\]
we in fact have that 
\[
    k_t(v) \le \binom{r}{t-1} - \binom{r-2}{t-3} - \binom{r-3}{t-3}
\]
for all $v\in V(G)$. Now Lemma \ref{lem:ktvbound} implies that $\rho_t(G) \le \parens[\big]{\binom{r}{t-1} - \binom{r-2}{t-3} - \binom{r-3}{t-3}}/t$ and it only remains to show that
\[
    \frac{\binom{r}{t-1} - \left(\binom{r-2}{t-3} + \binom{r-3}{t-3}\right)}{t} 
        \le \frac{\binom{r+1}{t} - \binom{r-1}{t-2}}{r+1} = \rho_t(\T{r+1}{r}) .
\]
The leading terms on the two sides sides cancel, so actually we need to prove that 
\[
    t\binom{r-1}{t-2} \le (r+1)\parens[\Big]{\binom{r-2}{t-3} + \binom{r-3}{t-3}}.
\]
Clearing fractions (by multiplying by $(t-2)!$) and cancelling the factor $(r-3)(r-4)\cdots(r-t+2)$ common to all terms, we are reduced to showing that 
\begin{align*}
    t(r-1)(r-2) &\le (r+1)(t-2)\parens[\big]{(r-2) + (r-t+1)} = (r+1)(t-2)(2r-t-1), \\
\shortintertext{i.e., that}
              0 &\le -(r+1)t^2 + (r^2 + 6r - 1)t + (-4r^2-2r+2), \\
\shortintertext{i.e., that}
4 -\frac{6}{r+1}&\le t \le r+1.     
\end{align*}
This clearly holds for $t\ge 4$. It only holds when $t=3$ if $r\le 5$, but fortunately Lemma~\ref{lem:t36} covers the remaining cases.
\end{proof}

\section{Computing $\ftdw{r-1}{r+1}{r}$ and $\ftdw{r}{r+1}{r}$}\label{sec:max}

In this section we make progress on computing $\ftdw{t}{r+1}{r}$. We show that for the two largest possible sizes, $t = r-1$ and $t=r$, the extremal graphs are the lower bound graph $L(r+1,r)$.

The following lemma describes the possibilities for the closed neighborhood of an almost optimal vertex in the graphs we are considering. 

\begin{lemma}\label{lem:nhd}
    For $r \ge 3$, if $N$ is a graph with $|V(N)| \le r+2$, $\w(N) \le r$, and $k_r(N) = 3$, then $N = K_{r+2}-K_3$ or $N = K_{r+2}-P_4$.
\end{lemma}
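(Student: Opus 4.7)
The plan is to pass to the complement $M = K_{r+2}\wo N$ and recast the problem as one about small vertex covers of $M$.

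First I would pin down $|V(N)|$. Applying Zykov's theorem (Theorem~\ref{thm:Zykov}) to $N$ gives $k_r(N) \le k_r(\T{|V(N)|}{r})$, and Lemma~\ref{lem:count} gives $k_r(\T{r+1}{r}) = 2$ together with $k_r(\T{n}{r}) \le 1$ for $n \le r$. Since $k_r(N) = 3$, this forces $|V(N)| = r+2$.

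Now let $M = K_{r+2}\wo N$, the complement of $N$ inside $K_{r+2}$. An $r$-clique of $N$ is exactly an $r$-subset of $V(N)$ spanning no edge of $M$, so its 2-element complement in $V(N)$ is a 2-element vertex cover of $M$. This sets up a bijection between $r$-cliques of $N$ and 2-element vertex covers of $M$. Similarly, $\w(N) \le r$ translates to $M$ having no $1$-element vertex cover. The lemma thus reduces to the following: any graph $M$ on $r+2$ vertices with $\tau(M) = 2$ and exactly three 2-element vertex covers is $K_3$ or $P_4$ (plus isolated vertices).

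To prove this, let $C_1, C_2, C_3$ be the three 2-covers and case-split on their intersection pattern. The crucial fact used throughout is that every edge of $M$ must meet every $C_i$, which severely restricts the possible edges. Two patterns survive. The \emph{triangle-of-pairs} pattern $\{u,v\}, \{u,w\}, \{v,w\}$ (each pair meets every other, but no common vertex) forces both endpoints of each edge of $M$ to lie in $\{u,v,w\}$, and $\tau(M) = 2$ then forces $M[\{u,v,w\}] = K_3$; this yields $N = K_{r+2}-K_3$. The \emph{path-of-pairs} pattern $\{u,v\}, \{u,a\}, \{v,b\}$ with $u, v, a, b$ four distinct vertices forces every edge of $M$ to be one of $uv, ub, va$; having three 2-covers forces all three of these edges to be present, so $M$ is a $P_4$ on $b{-}u{-}v{-}a$ and $N = K_{r+2}-P_4$.

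The remaining intersection patterns are each ruled out in a few lines. If all three $C_i$ share a common vertex $u$, a short edge-chase forces every edge of $M$ to contain $u$, giving $\tau(M) \le 1$, contradicting $\tau(M) = 2$. If the three $C_i$ are pairwise disjoint, no edge (having only two endpoints) can meet all three $C_i$, so $M$ has no edges and $\tau(M) = 0$. Finally, if exactly one of the three pairs, say $C_3$, is disjoint from both $C_1$ and $C_2$ while $C_1 \cap C_2$ is a single vertex $u$, chasing which edges can meet $C_3$ as well shows every edge of $M$ must contain $u$, again yielding $\tau(M) \le 1$. The main obstacle is really just the bookkeeping in this case split; each case is a few lines of direct edge-chasing.
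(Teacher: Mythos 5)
Your proof is correct, and its skeleton is the same as the paper's: force $|V(N)|=r+2$, pass to the complement, and classify the graphs on $r+2$ vertices having no vertex cover of size at most $1$ and exactly three vertex covers of size $2$ (this is exactly the paper's Lemma~\ref{lem:comp}). The differences are in how the two steps are executed. For the first step you invoke Zykov plus the Tur\'an count ($k_r(\T{r+1}{r})=2<3$), which is a slightly cleaner and more uniform way to rule out $|V(N)|\le r+1$ than the paper's ad hoc argument. For the classification you argue ``from the covers'': you enumerate the possible intersection patterns of the three $2$-covers (common vertex; triangle of pairs; path of pairs; one pair disjoint from two meeting pairs; all pairwise disjoint) and edge-chase in each case; I checked the case analysis is exhaustive and each chase is right, including that $2K_2$ is excluded because it has four $2$-covers and that the degenerate patterns force a cover of size at most $1$. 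The paper instead argues ``from the edges'': it first shows $C$ contains $K_3$ or $2K_2$ and then excludes $3K_2$, $P_3\cup K_2$, and $C_4$ as subgraphs. Both are short and elementary; one thing the paper's subgraph-exclusion style buys is that it transfers almost verbatim to the quantitative version (Lemma~\ref{lem:comp2}), where one only has upper and lower bounds on the number of $3$-covers rather than an exact count of $2$-covers, so there is no small finite list of cover patterns to enumerate; your pattern-enumeration argument does not extend as directly to that setting.
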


The proof of this comes immediately after we prove the following translated version. 

\begin{lemma}\label{lem:comp}
	If $C$ is a graph with no vertex cover of size at most $1$ and exactly $3$ vertex covers of size $2$ then $C$ is either the union of $K_3$ with isolated vertices or the union of $P_4$ with isolated vertices.
\end{lemma}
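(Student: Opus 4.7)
The plan is to first strip away the isolated vertices and then split on the matching number of what remains. Let $C_0$ denote the subgraph of $C$ induced by its non-isolated vertices. I would first observe that every $2$-vertex cover of $C$ lies entirely in $V(C_0)$: if such a cover contained an isolated vertex $v$, then deleting $v$ would give a vertex cover of $C$ of size at most $1$, contradicting the hypothesis. Hence the $2$-vertex covers of $C$ and of $C_0$ coincide, so it suffices to prove $C_0 \in \{K_3, P_4\}$. In particular $\tau(C_0) = 2$, which forces $1 \le \nu(C_0) \le 2$, and I would split into cases on $\nu(C_0)$.

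If $\nu(C_0) = 1$, then any two edges of $C_0$ share a vertex, so (by a standard ``triangle-or-star'' argument: if three edges do not share a common vertex then they must form a triangle, and no further edge can intersect all three from outside) the edge set of $C_0$ is either a star $K_{1,k}$ or a triangle. Stars have $\tau = 1$, which is ruled out, so $C_0 = K_3$, and a direct check confirms that $K_3$ has exactly three $2$-vertex covers.

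If $\nu(C_0) = 2$, fix a matching $\{ab, cd\}$. Every $2$-vertex cover of $C_0$ must contain exactly one endpoint of each matching edge, so the only candidates are $\{a,c\}$, $\{a,d\}$, $\{b,c\}$, and $\{b,d\}$, and by hypothesis exactly one of them fails to be a cover. The automorphisms of the matching (swapping $a\leftrightarrow b$, swapping $c\leftrightarrow d$, or swapping the two matching edges) act transitively on these four candidates, so after relabeling I may assume the non-cover is $\{b,d\}$. Then every edge $e \in E(C_0)$ must simultaneously be incident to $\{a,c\}$, $\{a,d\}$, and $\{b,c\}$; a short case split on whether $e$ contains $a$ forces $e \in \{ab, ac, cd\}$. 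Since $\{b,d\}$ already covers both $ab$ and $cd$, the only edge that can prevent $\{b,d\}$ from being a cover is $ac$, which must therefore be present. Combined with $C_0$ having no isolated vertices, this gives $V(C_0) = \{a,b,c,d\}$ and $E(C_0) = \{ab, ac, cd\}$, i.e.\ $C_0 = P_4$.

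The hard part is the bookkeeping in the $\nu(C_0) = 2$ case: turning the three simultaneous incidence constraints from the three known covers into the conclusion $E(C_0) \subseteq \{ab, ac, cd\}$, and then using the one non-cover to force $ac$ actually to be present. Everything else -- the reduction to $C_0$, the $\nu = 1$ case, and reattaching the isolated vertices at the end -- is essentially bookkeeping.
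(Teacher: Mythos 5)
Your proof is correct, and it takes a somewhat different route from the paper's, mainly in how the two-disjoint-edges case is closed out. The paper first shows $C$ contains a $K_3$ or a $2K_2$, then uses the observation that a spanning subgraph has at least as many vertex covers of each size as $C$, and excludes $3K_2$, $P_3\cup K_2$, and $C_4$ by counting their $2$-covers, leaving only $K_3$ or $P_4$ plus isolates. You instead strip the isolated vertices, split on the matching number $\nu(C_0)\in\{1,2\}$, handle $\nu=1$ via the star-or-triangle classification of pairwise-intersecting edge sets (stars being killed by $\tau(C_0)=2$), and for $\nu=2$ you note that every $2$-cover is a transversal of a fixed maximum matching $\{ab,cd\}$, so exactly one of the four transversals fails; the three that succeed force $E(C_0)\subseteq\{ab,ac,cd\}$ and the one that fails forces $ac$ to be present, giving $P_4$. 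All the steps check out (in particular the WLOG via relabeling the matching endpoints, and the incidence case split on whether $a\in e$). Your argument is more direct and self-contained for this lemma, avoiding both the spanning-subgraph monotonicity observation and the small-graph cover counts; what the paper's formulation buys is reusability, since the same scheme (find $K_3$ or $2K_2$, then exclude $3K_2$, $P_3\cup K_2$, $C_4$ by counting covers) is exactly what is extended in Lemma~\ref{lem:comp2} to covers of size $3$, where a transversal-style enumeration would be more cumbersome.
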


\begin{proof}
    We first show that $C$ contains either a $K_3$ or two disjoint edges. If $\D(C)\le 1$ then $C$ must contain two disjoint edges else it has a vertex cover of size at most $1$. Otherwise pick $v$ with $d(v)\ge 2$. Since $\set{v}$ is not a vertex cover there must be an edge $e$ of $C$ not incident with $v$. Either $e$ joins two neighbors of $v$, and $C$ contains a $K_3$, or $e$ is disjoint from some edge incident with $v$. 

If $C$ contains a $K_3$ then we know that every vertex cover of $C$ of size $2$ must contain (and hence consist of) two vertices of the triangle. Since $C$ has three vertex covers of size two, we can pick any pair of vertices from the triangle. Now $C$ cannot contain another edge; the triangle edges are the only ones covered by all three of these pairs. Hence $G$ is the union of $K_3$ and isolates.

On the other hand, if $K_3 \not\of G$ and $2K_2\of C$, all vertex covers of size $2$ must contain (and hence consist of) one vertex from each edge of the $2K_2$. Since three of the possible pairs are vertex covers of $C$ there must be exactly one other edge of $C$ contained in the vertex set of the $2K_2$, making that induced graph a $P_3$. Again, the only potential pairs that are covered by all three vertex covers of size $2$ of the $P_4$ are exactly the $P_4$ edges, so $C$ must be $P_4$ with isolated vertices.
\end{proof}

\begin{proof}[Proof of Lemma~\ref{lem:nhd}]
	Since $k_r(N)=3$, $N$ cannot have $r$ or fewer vertices. If $N$ has $r+1$ vertices there must be three vertices $v_1,v_2,v_3$ such that each $N\wo\set{v_i}$ is complete, and hence $N$ itself is complete on $r+1=3$ vertices, contradicting $r\ge 3$. Thus $N$ has $r+2$ vertices. Now letting $C = \compl{N}$, we see that $C$ satisfies the conditions of Lemma~\ref{lem:comp} and we are done.
\end{proof}

\begin{theorem}\label{thm:rr+1r}For $r \ge 4$, $\ftdw{r}{r+1}{r} = \rho_r(\T{r+2}{r}) = \frac{4}{r+2}$.
\end{theorem}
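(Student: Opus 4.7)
The plan is to induct on $n = |V(G)|$, with base case $n \le r+2$ handled by Lemma~\ref{lem:basecase} (noting $L(r+1,r) = \T{r+2}{r}$ with $\rho_r = 4/(r+2)$ by Lemma~\ref{lem:count}). For the inductive step on $G \in \cG(r+1,r)$ with $n > r+2$, I classify vertices by their weight $k_r(v)$. By Lemma~\ref{lem:perfectvx}, $k_r(v) \le k_{r-1}(\T{r+1}{r-1}) = 4$, with equality iff $v$ is perfect. A quick application of Zykov's theorem to $N(v)$ shows that $d(v) \le r$ forces $k_r(v) \le k_{r-1}(\T{r}{r-1}) = 2$; hence any vertex with $k_r(v) = 3$ has $d(v) = r+1$, and then Lemma~\ref{lem:nhd} applied to $N[v]$ (which has exactly $r+2$ vertices) forces $N[v] \iso K_{r+2}-K_3$ or $K_{r+2}-P_4$.

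If $G$ has a perfect vertex $v$, then $N[v] \iso \T{r+2}{r} = L(r+1,r)$, $k_r(N[v]) = 4$, and its four border vertices (those in the two disjoint complementary edges) each have at most one cross-edge. The induced border $B$ is a $C_4$ with $\w(B) = 2$, so Lemma~\ref{lem:detach} makes $N[v]$ detachable for $t > 3$, covering $t = r \ge 4$, and induction gives $\rho_r(G) \le 4/(r+2)$. Otherwise the maximum weight is at most $3$. If some $v$ has $k_r(v) = 3$ with $N[v] = K_{r+2}-K_3$, the three border vertices form an independent set of $G$ (so $\w(B) = 1$) each with at most two cross-edges, and Lemma~\ref{lem:detach} again gives detachability for $t > 3$. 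If $N[v] = K_{r+2}-P_4$ with endpoints $a, d$ and interior $b, c$ of the removed path, the border $B$ induces a $P_4$ with $\w(B) = 2$; the endpoints $a, d$ have $\dx \le 1$ while $b, c$ have $\dx \le 2$, and every edge of $B$ contains one of $a, d$, so the sharper clause of Lemma~\ref{lem:detach} yields detachability already for $t > 3$. In both subcases $\rho_r(N[v]) = 3/(r+2) \le 4/(r+2)$, and induction closes the argument. Finally, if every vertex has $k_r(v) \le 2$, Lemma~\ref{lem:ktvbound} gives $\rho_r(G) \le 2/r \le 4/(r+2)$ for $r \ge 2$.

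The main technical obstacle is the $K_{r+2}-P_4$ subcase at $r = 4$: a naive application of Lemma~\ref{lem:detach} with $i = j = 2$ only yields detachability for $t > 4$, which misses $t = r = 4$. The fix is to exploit the asymmetry of cross-edge bounds on the border (the endpoints $a, d$ have only one cross-edge each and appear in every edge of the border $P_4$) so that the refined clause of Lemma~\ref{lem:detach} applies. Combining the resulting upper bound with the matching lower bound $\rho_r(L(r+1,r)) = 4/(r+2)$ from Lemma~\ref{lem:lower} completes the proof.
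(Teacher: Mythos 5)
Your proposal is correct and takes essentially the same route as the paper's proof: induction with the base case from Lemma~\ref{lem:basecase}, a case split on the maximum $r$-weight (a perfect vertex, a vertex of weight $3$ handled via Lemma~\ref{lem:nhd}, or all weights at most $2$ handled via Lemma~\ref{lem:ktvbound}), and detachability of $N[v]$ via Lemma~\ref{lem:detach}, including the refined clause for the $K_{r+2}-P_4$ configuration. The only cosmetic difference is that you spell out (via Zykov applied to $N(v)$) why a weight-$3$ vertex must have degree $r+1$, a point the paper dismisses as easy to check.
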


\begin{proof} First note that $\rho_r(G) \le \rho_r(\T{r+2}{r})$ for all graphs $G$ on at most $r+2$ vertices by Lemma \ref{lem:basecase}. Let $G$ be a graph in $\cG(r+1,r)$. An optimal vertex $v$ in $G$ would have $N(v) \iso\T{r+1}{r-1}$. Since $r \ge 4$, we have $\frac{r+1}{r-1} < 2$, and so $\T{r+1}{r-1}$ has $r-3$ parts of size 1 and two parts of size 2. The closed neighborhood $N[v]$ would be isomorphic to $ \T{r+2}{r}$, which has $r-2$ parts of size 1 and two parts of size 2. Thus we would have $k_r(v) = 4$. By Lemma~\ref{lem:perfectvx}, only  optimal vertices achieve $k_r(v) = 4$.

\begin{case} $G$ has an optimal vertex $v$.\end{case}

We claim that the closed neighborhood $N[v] \iso\T{r+2}{r}$ is detachable. The border vertices are the four vertices in the parts of size 2; the largest clique on them is a $K_2$. A border vertex has at most 1 cross-edge. Since $2 + 1 < r$, Lemma \ref{lem:detach} shows $N[v]$ is detachable. By Observation~\ref{obs:induction} we are done by induction.

\begin{case}$G$ has a vertex $v$ such that $k_r(v) = 3$.\end{case}

It is easy to check that in this case $N=N[v]$ satisfies the hypotheses of Lemma~\ref{lem:nhd}, and therefore $N \iso K_{r+2} - K_3$ or $N \iso K_{r+2} - P_4$. We will show that in either case $N$ is detachable. If $N \iso K_{r+2} - K_3$, then the independent set of border vertices $B$ has clique number 1, and each border vertex has at most 2 cross-edges. Again, by Lemma \ref{lem:detach}, $N$ is detachable. 

If $N \iso K_{r+2} - P_4$, then border vertices $B$ of $N$ induce a $P_4$ (the complement of the one subtracted) with clique number $2$. Every edge of $B$ contains a vertex $w$ with $\dx(w)<2$. Therefore, by Lemma~\ref{lem:detach} $N$ is detachable for $t\ge 4$, and in particular for $r$. It is also suboptimal: $\rho_r(N) = \frac{3}{r+2} < \frac{4}{r+2} = \rho_r(\T{r+2}{r})$. Hence we are done by induction.

\begin{case} $k_r(v)\le 2$ for all vertices $v$ of $G$. \end{case} 
    
In this case we have (by Lemma~\ref{lem:ktvbound}) that 
\[
    \rho_r(G) \le \frac{2}{r} < \frac{4}{r+2} = \rho_r(\T{r+2}{r}). \qedhere
\]
\end{proof}

We are able to extend the methods of Lemma~\ref{lem:nhd} to deal with bounds on $k_{r-2}(N)$, and having done so we will extend the previous result to prove that $\T{r}{r+2}$ is also optimal for $\rho_{r-1}$ in $\cG(r+1,r)$. As before, the lemma is easier to understand in a complementary version.

\begin{lemma}\label{lem:nhd2} For $r \ge 5$, if $N$ is a graph on at most $r+1$ vertices such that $\w(N) \le r-1$ and
    \[
        4r-16+\frac{36}{r+2} \le k_{r-2}(N) < 4r-8,
    \] 
then $N \iso K_{r+1}-K_3$ or $N \iso K_{r+1}-P_4$.
\end{lemma}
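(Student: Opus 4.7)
The plan is to mirror the proof of Lemma~\ref{lem:nhd}. First I would show $|V(N)| = r+1$: if $|V(N)| \le r$, then Zykov's Theorem gives $k_{r-2}(N) \le k_{r-2}(\T{r}{r-1}) = \binom{r}{2} - \binom{r-2}{2} = 2r-3$, and a quick algebraic check verifies that $2r-3 < 4(r-1)^2/(r+2) = 4r-16+\frac{36}{r+2}$ for all $r \ge 3$, contradicting the lower bound of the hypothesis.

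Next I would pass to the complement $C = \compl{N}$, a graph on $r+1$ vertices. Exactly as in Lemma~\ref{lem:nhd}, the condition $\omega(N) \le r-1$ translates to ``$C$ has no vertex cover of size at most $1$,'' and $(r-2)$-cliques of $N$ correspond (via $S \leftrightarrow V(C) \wo S$) to size-$3$ vertex covers of $C$, so $k_{r-2}(N) = f_3(C)$, where $f_3(C)$ counts the size-$3$ vertex covers of $C$. Writing $H$ for the subgraph of $C$ induced by its non-isolated vertices, $h = |V(H)|$, and $g_k(H)$ for the number of size-$k$ vertex covers of $H$, one has $f_3(C) = g_2(H)(r+1-h) + g_3(H)$, since isolated vertices of $C$ do not contribute to edge coverage.

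The heart of the argument is a structural claim in the spirit of Lemma~\ref{lem:comp}: the only graphs $H$ with no isolated vertex, no vertex cover of size at most $1$, and $g_2(H) \ge 3$ are $K_3$, $P_4$, and $2K_2$, with $(g_2,g_3)$ equal to $(3,1)$, $(3,4)$, and $(4,4)$ respectively. I would prove this by the same overlap analysis as in Lemma~\ref{lem:comp}: any size-$2$ vertex cover $\{u,v\}$ forces every edge of $H$ to be incident to $\{u,v\}$. Two such vertex covers sharing a common vertex force $H$ into a star-plus-at-most-one-extra-edge configuration, while three sharing a common vertex force a pure star (which has a size-$1$ vertex cover, contradiction); three size-$2$ vertex covers in a ``triangle''-overlap pattern pin $H$ down to $K_3$, a ``path''-overlap pattern pins $H$ down to $P_4$ or $2K_2$, and three pairwise-disjoint size-$2$ vertex covers are impossible.

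Combining everything: $H \iso 2K_2$ yields $f_3(C) = 4r-8$, failing the strict upper bound; $H \iso K_3$ yields $f_3(C) = 3r-5$, giving $N \iso K_{r+1}-K_3$; $H \iso P_4$ likewise yields $f_3(C) = 3r-5$ and $N \iso K_{r+1}-P_4$. For $g_2(H) \le 2$, I would verify $f_3(C) \le 2(r+1-h) + g_3(H) \le 2r-1$, with the maximum attained by $H \iso P_3 \cup K_2$ on five vertices, and observe that $2r-1 < 4(r-1)^2/(r+2)$ for $r \ge 5$, so these $H$ fall outside the range. The principal obstacle is this uniform bound $f_3(C) \le 2r-1$ for $g_2(H) \le 2$: small $h$ is dispatched by enumerating the handful of admissible graphs on $\le 5$ vertices, and for larger $h$ the observation that any size-$2$ vertex cover $\{u,v\}$ forces all edges to touch $\{u,v\}$ confines $H$ to a narrow family of star-plus-edge configurations whose $f_3$-counts are all at most $2r-3$.
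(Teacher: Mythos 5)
Your overall route is sound and its numbers all check out, but it is organized differently from the paper's. The paper also passes to the complement $C=\overline{N}$ and counts vertex covers of size $3$ (its Lemma~\ref{lem:comp2}); however, it proceeds by first forcing $\Delta(C)\le 2$ with a counting argument, then excluding $3K_2$, $P_3\cup K_2$ and $C_4$ as subgraphs by showing each would make the number of $3$-covers too small (or, for $2K_2$ alone, exactly $4r-8$), and only then reading off the structure. You instead split according to $g_2(H)$, the number of $2$-covers of the non-isolated part $H$: a purely structural classification ($g_2\ge 3$ forces $H\in\{K_3,P_4,2K_2\}$) plus a uniform numerical bound when $g_2\le 2$, tied together by the identity $f_3(C)=g_2(H)(r+1-h)+g_3(H)$. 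The endpoint values you quote are correct ($2K_2\mapsto 4r-8$, which the strict upper bound kills; $K_3,P_4\mapsto 3r-5$; $P_3\cup K_2\mapsto 2r-1$; and $2r-1<4(r-1)^2/(r+2)$ exactly when $r\ge 5$, matching the hypothesis), and the first step ($|V(N)|=r+1$, via $k_{r-2}\le 2r-3$ otherwise) agrees with the paper.

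Two spots of your sketch need to be filled in before this is a complete proof. First, in the classification for $g_2\ge 3$, your list of overlap patterns of three $2$-covers omits the pattern in which two covers intersect and the third is disjoint from both; it is easily impossible (two disjoint covers force every edge to have one endpoint in each, and the third cover then forces all edges through the single common vertex of the first two, giving a $1$-cover), but it must be said. Second, and more substantively, the bound $f_3(C)\le 2r-1$ for $g_2(H)\le 2$ is not delivered by the observation you cite: when $g_2(H)\ge 1$ the condition ``all edges touch $\{u,v\}$'' yields double stars (two centers, possibly with the edge $uv$), not merely star-plus-edge graphs, and when $g_2(H)=0$ the observation says nothing at all, yet $g_3$ can still be as large as $8$ (e.g.\ $H=3K_2$, where $f_3=8\le 2r-1$ only because $r\ge 5$). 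The $g_2=0$ case needs its own short argument --- for instance, matching number at least $4$ gives $g_3=0$, matching number exactly $3$ gives $g_3\le 2^3=8$, and matching number at most $2$ with no $2$-cover confines $H$ to a few small graphs such as $C_5$ --- and the $g_2\in\{1,2\}$ case needs the double-star count (which does come out below $2r-1$, with $P_3\cup K_2$ the extreme case at $h=5$). With those patches your argument is correct and is a legitimate alternative to the paper's forbidden-subgraph counting.
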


\begin{lemma}\label{lem:comp2}
    Suppose $r\ge5$. If $C$ is a graph on $r+1$ vertices having no vertex cover of size $1$, such that the number of vertex covers of size $3$ in $C$, denoted $c_3$, satisfies 
    \[
        4r-16+\frac{36}{r+2} \le c_3 < 4r-8,
    \]
    then $C$ is either the union of $K_3$ with isolated vertices or the union of $P_4$ with isolated vertices.
\end{lemma}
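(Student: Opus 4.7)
The plan is to follow the dichotomy from the proof of Lemma~\ref{lem:comp}: since $C$ has no vertex cover of size $1$, $C$ contains either a $K_3$ or a $2K_2$. The main new tool is monotonicity of $c_3$ under edge addition: if $E(C') \subseteq E(C)$ on the same vertex set, then every size-$3$ vertex cover of $C$ is also one of $C'$, so $c_3(C) \le c_3(C')$. This lets me bound $c_3(C)$ from above by evaluating $c_3$ on any convenient spanning subgraph.

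If $C$ contains no $2K_2$, then $C$ has matching number at most $1$, and as in the proof of Lemma~\ref{lem:comp} this forces $C$ to be either $K_3 \cup \text{isolates}$ or a star $K_{1,k} \cup \text{isolates}$. The hypothesis rules out stars (they have a vertex cover of size $1$), so $C = K_3 \cup \text{isolates}$, the first of the two conclusions.

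In the main case, fix a $2K_2$ in $C$ on edges $ab, cd$. A direct count gives $c_3(2K_2 \cup \text{isolates}) = 4r - 8$, so the strict upper bound forces $C$ to contain at least one further edge $f$. That edge falls into one of three classes: disjoint from $\{a,b,c,d\}$ (so $C \supseteq 3K_2$, with $c_3 \le 8$); incident to exactly one of $\{a,b,c,d\}$ (so $C \supseteq P_3 \cup K_2$, with $c_3 \le 2r-1$); or with both endpoints in $\{a,b,c,d\}$ (so $C \supseteq P_4$, with $c_3 \le 3r-5$). A routine polynomial check shows that for $r \ge 5$ the lower bound $4r-16+\frac{36}{r+2}$ strictly exceeds both $8$ and $2r-1$; the threshold $r\ge 5$ is in fact tight, since at $r=4$ the graph $P_3 \cup K_2 \cup \text{isolates}$ would also fall in range. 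Thus $C$ must contain $P_4$.

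It remains to forbid further edges beyond $P_4$. Every one-edge extension of $P_4 \cup \text{isolates}$ is, up to isomorphism, $C_4$, the paw, the $5$-vertex broom (the star $K_{1,3}$ with one edge subdivided), $P_5$, or $P_4 \cup K_2$, each plus isolates; a short pigeonhole count conditioned on $|S \cap V(P_4)|$ shows each has $c_3 \le 2r-2$. The inequality $2r-2 < 4r-16+\frac{36}{r+2}$ reduces to $(14-2r)(r+2) < 36$, which holds for every $r \ge 5$. By monotonicity, any $C$ properly containing $P_4 \cup \text{isolates}$ has $c_3$ below the lower bound, contradicting the hypothesis, so $C = P_4 \cup \text{isolates}$. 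The main obstacle is the bookkeeping in this last step: enumerating the one-edge extensions of $P_4$ reliably and computing $c_3$ for each; none is hard individually, but all must be handled to rule out any extension slipping into the allowed range.
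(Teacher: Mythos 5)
Your proof is correct (under the standing assumption $r\ge 5$, which the lemma's statement omits but which both you and the paper genuinely need -- as you note, $P_3\cup K_2$ at $r=4$ lies in the allowed range), and it follows the same counting strategy as the paper but organizes the endgame differently. The paper first proves $\Delta(C)\le 2$ by bounding the covers through a vertex of degree at least $3$ (at most one cover avoiding $v$, at most $2r-3$ containing it), then rules out $3K_2$, $P_3\cup K_2$, and $C_4$ by the same monotone counting you use, and finally argues that any edge beyond the $K_3$ or the $P_4$ creates one of these forbidden objects or a vertex of degree $3$. You skip the maximum-degree step entirely: the configurations it was needed for (the paw and the broom among the one-edge extensions of $P_4$) you instead kill by direct counts $c_3\le 2r-2$, and you dispose of the $K_3$ branch via the matching-number-one characterization (triangle or star) rather than via the forbidden-subgraph list. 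The trade-off is mild: the paper's degree bound handles all high-degree extensions at once, while your route costs a five-case enumeration but is self-contained and yields slightly sharper numbers -- e.g.\ your exact value $c_3(P_3\cup K_2\cup\text{isolates})=2r-1$ (the paper's $2(r-4)+6=2r-2$ undercounts the within-$P_3\cup K_2$ covers, of which there are $7$, not $6$), which still clears the threshold $4r-16+\tfrac{36}{r+2}$ for $r\ge 5$, exactly the tightness you point out. The remaining bookkeeping you flag (the five extension counts) is routine and your stated bounds check out, so there is no gap.
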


\begin{proof}
        First note that, exactly as in the proof of Lemma~\ref{lem:comp}, $C$ contains either a $K_3$ or two disjoint edges. Now we'll show that $\D(C)\le 2$. If $v$ is a vertex of degree at least $3$ then there is at most one vertex cover of size $3$ not containing $v$ since it must contain each of $v$'s neighbors. On the other hand $C\wo \set{v}$ has some edge (since there is no dominating vertex) so the number of vertex covers of size $3$ containing $v$ is at most $2r-3$. To see this note that we have to augment $\set{v}$ with a vertex cover of size $2$ of $C\wo \set{v}$. Such covers must contain at least one vertex of the edge; there are $2r-3$ such sets. But for $r\ge 5$ we have
    \[
        2r-2 < 4r-16 + \frac{36}{r+2},
    \]
    contradicting our hypothesis on $c_3$. 
    
    Now, in similar fashion, we'll show that $C$ can't contain $3P_2$, $P_3\cup P_2$, or $C_4$. In the first case we'd have at most $8$ vertex covers of size $3$, and, for $r\ge 5$,
    \[
        c_3 \le 8 < 4r-16+\frac{36}{r+2}.
    \]
    For the next case, note that vertex covers of $C$ would have to contain two or three vertices from $P_3\cup P_2$, so, splitting the count according to which it is,
    \[
        c_3 \le 2(r-4) + 6 = 2r-2 < 4r-16+\frac{36}{r+2},
    \]
    for $r\ge 5$. Finally if $C_4\of G$ then
    \[
        c_3 \le 2(r-1) < 4r-16+\frac{36}{r+2},
    \]
    for $r\ge 5$. In any of the cases we get a contradiction.  
    
    Now we are almost done. If $K_3\of C$ then there can't be any other edges lest $C$ contain $P_3\cup P_2$ or a vertex of degree at least $3$. If $2P_2\of C$ then that can't be all the edges of $C$ for then we'd have $c_3 = 4(r-3)+4=4r-8 \not< 4r-8$. Any additional edge can only between the components of $2P_2$ else one of $P_3\cup P_2$ or $3P_2$ is a subgraph of $C$. Thus $P_4\of C$. The existence of any other edge would produce a forbidden subgraph. Thus the only non-trivial component of $C$ is a $K_3$ or a $P_4$.
\end{proof}

\begin{proof}[Proof of Lemma~\ref{lem:nhd2}]
    If $N$ has at most $r$ vertices then $N$ either is not complete or has fewer than $r$ vertices, so
    \[
        k_{r-2}(N) \le \max(k_{r-2}(K_r-e),k_{r-2}(K_{r-1})) = \max(2r-3,r-1) < 4r-16+\frac{36}{r+2},
    \]
    for $r\ge 3$, contradicting our hypotheses. On the other hand if $N$ has $r+1$ vertices then $C=\compl{N}$ satisfies the hypotheses of Lemma~\ref{lem:comp2}, and we are done.
\end{proof}

\begin{theorem}\label{thm:r-1r+1r}For $r \ge 5$, $\ftdw{r-1}{r+1}{r} = \rho_{r-1}(\T{r+2}{r})$.
\end{theorem}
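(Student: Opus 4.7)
The plan is to mirror the proof of Theorem \ref{thm:rr+1r}, using Lemma \ref{lem:nhd2} as the analog of Lemma \ref{lem:nhd}, and induct on $n=|V(G)|$. For the base case $n\le r+2$, Lemma \ref{lem:basecase} gives $\rho_{r-1}(G)\le \rho_{r-1}(L(r+1,r))=\rho_{r-1}(\T{r+2}{r})$, noting that $\D=r+1$ and $\w=r$ yield $a=1,b=2$ and hence $L(r+1,r)=\T{r+2}{r}$. A computation via Lemma \ref{lem:count} gives the target $\rho_{r-1}(\T{r+2}{r})=(4r-4)/(r+2)$ and the perfect-vertex weight $k_{r-1}(v)=k_{r-2}(\T{r+1}{r-1})=4r-8$. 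Assume now $n>r+2$ and split by the largest value of $k_{r-1}(v)$ occurring in $G$.

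Case 1 (there is a perfect vertex $v$). Then $N[v]\iso \T{r+2}{r}$; the four border vertices (those in parts of size $2$) induce a $C_4$ of clique number $i=2$, and each has at most $(r+1)-r=1$ cross-edge, so $j=1$. Lemma \ref{lem:detach} yields detachability for $t>i+j=3$, and since $r-1\ge 4$, $N[v]$ is detachable. Applying the Observation with $\rho_{r-1}(N[v])=\rho_{r-1}(\T{r+2}{r})$ and induction on $G\wo N[v]$ closes this case.

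Case 2 (some $v$ has $4r-16+\tfrac{36}{r+2}\le k_{r-1}(v)<4r-8$). Then $N(v)$ satisfies the hypotheses of Lemma \ref{lem:nhd2} (with $k_{r-2}(N(v))=k_{r-1}(v)$), so $N(v)\iso K_{r+1}-K_3$ or $K_{r+1}-P_4$, and correspondingly $N[v]\iso K_{r+2}-K_3$ or $K_{r+2}-P_4$. A direct count in either graph gives $k_{r-1}(N[v])=3r-2$, so $\rho_{r-1}(N[v])=(3r-2)/(r+2)\le (4r-4)/(r+2)$. For detachability: in the $K_3$ subcase the three border vertices form an independent set ($i=1$) each with $\dx\le 2$ ($j=2$), and Lemma \ref{lem:detach} gives detachability for $t>3$. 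In the $P_4$ subcase the border vertices induce $P_4$ (the complement of the removed $P_4$ restricted to these four vertices), so $i=2$ and $j\le 2$; the two outer vertices of the removed $P_4$ each lost only one edge, so have $\dx\le 1$, and every edge of the induced $P_4$ contains an outer vertex. The refined half of Lemma \ref{lem:detach} then gives detachability for $t>i+j-1=3$, which holds for $t=r-1\ge 4$. Induction on $G\wo N[v]$ finishes this case.

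Case 3 (every vertex has $k_{r-1}(v)<4r-16+\tfrac{36}{r+2}$). Lemma \ref{lem:ktvbound} gives
\[
  \rho_{r-1}(G)\le \frac{4r-16+\tfrac{36}{r+2}}{r-1}.
\]
Multiplying numerator and denominator by $r+2$ shows $(4r-16)(r+2)+36=4r^2-8r+4=4(r-1)^2$, so this upper bound equals $4(r-1)/(r+2)=\rho_{r-1}(\T{r+2}{r})$. The main obstacle is Case 2, and within it the $K_{r+2}-P_4$ subcase: the crude form of Lemma \ref{lem:detach} would require $t>4$, insufficient when $r=5$, and it is essential to check that every edge of the induced border $P_4$ meets an outer (lower-$\dx$) vertex so that the refined bound $t>3$ applies. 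The delicate match in Case 3, where the threshold of Lemma \ref{lem:nhd2} is exactly calibrated so that its complementary estimate yields $(4r-4)/(r+2)$, is the other point where the argument is tight rather than incidental.
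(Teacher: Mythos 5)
Your proof is correct and follows essentially the same route as the paper: the base case via Lemma \ref{lem:basecase}, then a three-way split on the maximum $(r-1)$-weight, using detachability of $N[v]$ (Lemmas \ref{lem:detach} and \ref{lem:nhd2}) in the first two cases and Lemma \ref{lem:ktvbound} with the calibrated threshold $4r-16+\frac{36}{r+2}$ in the last. Your treatment of the $K_{r+2}-P_4$ subcase (border vertices induce a $P_4$, so $i=2$, and every border edge meets a vertex with $\dx\le 1$, giving detachability for $t>3$) is exactly the refined argument the paper relies on.
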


\begin{proof} Let $G \in \cG(r+1,r)$ have $n$ vertices. First note that $\rho_{r-1}(G) \le \rho_{r-1}(\T{r+2}{r})$ for $n \le r+2$ by Lemma \ref{lem:basecase}.
	
	If $v$ is an optimal vertex of $G$ then we have $N(v) \iso\T{r+1}{r-1}$ which has (since $r\ge 5$) $r-3$ parts of size $1$ and two parts of size $2$. The closed neighborhood $N[v]$ is isomorphic to $\T{r+2}{r}$, which has $r-2$ parts of size 1 and two parts of size 2. Thus 
	\[
		k_{r-1}(v) = k_{r-2}(N(v)) = 4(r-3)+2\cdot 2 = 4r-8.
	\] 
	Moreover, by Lemma~\ref{lem:perfectvx} again, only optimal vertices have $k_{r-1}(v) = 4r-8$. Note that 
    \[
        \rho_{r-1}(\T{r+2}{r}) = \frac{4(r-2)+2\cdot 2}{r+2} = \frac{4r-4}{r+2}.
    \]
    We now split into cases depending on the weights of vertices in $G$.

\begin{case} $G$ has an optimal vertex $v$, i.e. one with $k_{r-1}(v) = 4r-8$.\end{case}

The largest clique on the border vertices of $N[v] \iso \T{r+2}{r}$ is a $K_2$. Each border vertex $w$ has $\dx(w)\le 1$. By Lemma \ref{lem:detach}, $N[v]$ is detachable for $t = r-1 \ge 4$, and (by Observation~\ref{obs:induction}) we are done by induction.

\begin{case} $G$ has a vertex $v$ such that 
	\[
		4r-16+\frac{36}{r+2} \le k_{r-1}(v) < 4r-8.
	\]
\end{case}

Note that $N=N(v)$ has at most $r+1$ vertices, contains no $r$-cliques, and, by hypothesis, satisfies 
\[
	4r-16+\frac{36}{r+2} \le k_{r-2}(N) < 4r-8.
\]
Since $r \ge 5$ we can apply Lemma \ref{lem:nhd2}, so $N \iso K_{r+1}-K_3$ or $N \iso K_{r+1}-P_4$. Consider first the case where $N \iso K_{r+1}-K_3$, so that $N[v] \iso K_{r+2}-K_3$. 
The border vertices of $N[v]$ in this case form an independent set. Each border vertex has at most two cross-edges. By Lemma \ref{lem:detach}, $N[v]$ is detachable for $t = r-1 \ge 4$. The number of $K_{r-1}$'s in $N[v]$ is $1 + 3(r-1)$: the first term counts the $K_{r-1}$'s not involving the border vertices, and the second counts the $K_{r-1}$'s with exactly one border vertex.

If, on the other hand $N[v] \iso K_{r+2}-P_4$, then the largest clique on the border vertices of $N[v]$ is of size $2$. The border vertices $w$ have $\dx(w)\le 2$, but at least one of each pair of adjacent border vertices has $\dx(w)\le 1$. By Lemma \ref{lem:detach}, $N$ is detachable for $t = r-1 \ge 4$. The number of $K_{r-1}$'s in $N[v]$ is $4 + 3(r-2)$: the first term counts the $K_{r-1}$'s with exactly one border vertex, and the second counts the $K_{r-1}$'s with exactly two border vertices.

For both possibilities, 
\[
	\rho_{r-1}(N[v]) = \frac{3r-2}{r+2} < \frac{4r-4}{r+2} = \rho_{r-1}(\T{r+2}{r}),
\]
for $r > 2$. We see that $N[v]$ is suboptimal and detachable, so $G$ is suboptimal by Obsertvation~\ref{obs:induction}.

\begin{case} Every vertex $v$ in $G$ has $k_{r-1}(v) < 4r-16+\frac{36}{r+2}$.\end{case}

By Lemma \ref{lem:ktvbound}, 
\[
	\rho_{r-1}(G) < \frac{4r-16+\frac{36}{r+2}}{r-1} = \frac{4r-4}{r+2} = \rho_{r-1}(\T{r+2}{r}),
\] 
so $G$ is suboptimal.
\end{proof}

\section{Other Values of $\D$ and $\w$}\label{sec:other}

We have seen that for pairs $(\D,\w)$ with $(\w-1)|\D$, in the case $\D=\w$, and for the triples $(t,\D,\w)$ studied in Section \ref{sec:max}, the lower bound graphs defined in Section \ref{sec:bounds} are extremal. In this section we give examples for which these lower bound graphs are not optimal. In particular we show that none of $\ftdw3{2k+1}3$, $\ftdw354$, and $\ftdw365$ are attained at $L(\D,\w)$. Moreover we show that the extremal graphs for $\ftdw{t}54$ and $\ftdw{t}65$ are not independent of $t$. 

We begin with a lemma that gives an upper bound on $\rho_t(G)$ if every vertex $v$ with maximum weight has sufficiently many neighbors that do not have the maximum weight.

\begin{lemma}\label{lem:kk-1}
	Let $G$ be a graph with $\D(G)\le \D$, and suppose $w:V(G)\to\Z$ is a function satisfying (for some $t$) that  
	\[
		k_t(G) = \frac{1}{t}\sum_{v \in V(G)} w(v)
	\]
	If for all $v\in V(G)$ we have  $w(v) \le k$, and also every vertex $v$ with $w(v) = k$ has at least $\ell$ neighbors $u$ with $w(u) \le k-1$, then 
	\[
		\rho_t(G) \le \frac1t\parens[\Big]{k-\frac{\ell}{\ell+\D}}.
	\]
\end{lemma}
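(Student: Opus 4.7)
The plan is a straightforward double-counting argument. Partition $V(G)$ as $A \sqcup B$, where $A = \setof{v \in V(G)}{w(v) = k}$ and $B = \setof{v \in V(G)}{w(v) \le k-1}$. Since the hypothesis says $w(v) \le k$ for all $v$ (and the weights are integer-valued), every vertex falls in exactly one of these two sets. Writing $n = |V(G)|$, the weighted sum splits as
\[
	\sum_{v \in V(G)} w(v) \le k|A| + (k-1)|B| = kn - |B|,
\]
so by the hypothesis relating $k_t(G)$ to the weights,
\[
	\rho_t(G) = \frac{1}{tn}\sum_{v \in V(G)} w(v) \le \frac{1}{t}\parens[\Big]{k - \tfrac{|B|}{n}}.
\]

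Next I would bound $|B|/n$ from below by double-counting edges between $A$ and $B$. By the assumption on maximum-weight vertices, each $v \in A$ contributes at least $\ell$ such edges, so $e(A,B) \ge \ell|A|$. On the other hand each $u \in B$ has total degree at most $\D$, so $e(A,B) \le \D|B|$. Combining gives $\ell|A| \le \D|B|$, and since $|A| = n - |B|$ this rearranges to
\[
	|B|(\ell + \D) \ge \ell n, \qquad \text{hence} \qquad \frac{|B|}{n} \ge \frac{\ell}{\ell+\D}.
\]
Substituting into the earlier inequality yields $\rho_t(G) \le \tfrac{1}{t}\parens[\big]{k - \tfrac{\ell}{\ell+\D}}$, as required.

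There is no real obstacle; the only thing to check is that both bounds on $e(A,B)$ are valid under the stated hypotheses, which they clearly are (the bound on the $B$-side only uses $\D(G) \le \D$, not any property of $w$ on $B$). The argument does not even need the full strength of integer-valued weights — any upper bound of $k-1$ on $w$ restricted to $B$ suffices.
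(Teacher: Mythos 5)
Your proof is correct and follows essentially the same route as the paper: the same partition into $A=\setof{v}{w(v)=k}$ and $B=\setof{v}{w(v)\le k-1}$, the same double count $\ell|A|\le e(A,B)\le \D|B|$, and the same algebraic rearrangement (you bound $|B|/n$ from below where the paper equivalently bounds $|A|/n$ from above). Nothing further is needed.
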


\begin{proof}
Partition the vertices of $G$ into $A = \setof{v \in V(G)}{w(v) = k}$ and $B =\setof{v \in V(G)}{w(v) \le k-1}$, with $a = |A|$ so $\abs{B} = n-a$. Consider the cross-edges between $A$ and $B$. Each $v \in A$ has at least $\ell$ edges to $B$. The number $e(A,B)$ of edges between vertices in different classes satisfies
\[
	a\ell \le e(A,B) \le \D(n-a),
\]
so $a\le n\D/(\ell+\D)$. Therefore 
\begin{align*}
		k_t(G)&= \frac1t\sum_{v\in V(G)} w(v) \\
			  &\le \frac1t \parens[\Big]{\frac{\D}{\ell + \D} nk 
			  									+\frac{\ell}{\ell+\D}n(k-1)} \\
			  &= \frac{n}{t}\parens[\Big]{k - \frac{\ell}{\ell+\D}}. \qedhere
\end{align*} 
\end{proof}

We start by defining a new potentially optimal graph for some values of $t, \D$, and $\w$.

\begin{definition}
	Given $k\ge 2$ we define $\BT(k)$ as follows. First let $\BTt(k)$ be obtained from $T_2(2k)$ by deleting an edge and then joining a new vertex to each of the vertices incident with the edge. Then we let $\BT(k) = \BTt(k) \vee E_{k+1}$. Note that $\BTt(k)$ is triangle-free and $\BT(k)\in \cG(2k+1,3)$.
\end{definition}

We first show that for $k= 2, 3$ the graph $\BT(k) \in  \cG(2k+1,3)$ has higher triangle density than the relevant Tur\'an graph, $\T{3k+1}{3}$.

\begin{theorem}\label{thm:2k+1}
	For $k = 2, 3$, 
	\[
		\rho_3(\BT(k)) = \frac{(k+1)(k^2+1)}{3k+2} > \rho_3(\T{3k+1}{3}),
	\]
    and in particular $f_3(2k+1,3) > \rho_3(\T{3k+1}{3})$.
\end{theorem}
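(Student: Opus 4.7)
The plan is a direct calculation, split into (i) counting triangles in $\BT(k)$, (ii) counting triangles in $\T{3k+1}{3}$, and (iii) comparing the two densities. There is no deep obstacle; the only subtlety is to organize the triangle count in $\BT(k)$ by exploiting its bipartite-plus-join structure.

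First I would verify that $\BT(k)\in \cG(2k+1,3)$, which the definition hints at but is worth recording. In $\widetilde{\BT}(k)$, the original $K_{k,k}$ has all degrees $k$; deleting the edge $xy$ reduces $d(x),d(y)$ to $k-1$, and then joining the new vertex $z$ to $\set{x,y}$ restores $d(x)=d(y)=k$ while giving $z$ degree $2$. Thus $\D(\widetilde{\BT}(k))=k$, and joining with $I_{k+1}$ gives $\D(\BT(k))=k+(k+1)=2k+1$. Since $\widetilde{\BT}(k)$ is bipartite (the new vertex $z$ goes into the part not containing $x$ and $y$) and $I_{k+1}$ is independent, any clique of $\BT(k)$ uses at most one vertex from $I_{k+1}$ and at most two from $\widetilde{\BT}(k)$, giving $\w(\BT(k))\le 3$.

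Next, to count $k_3(\BT(k))$: since $\widetilde{\BT}(k)$ is triangle-free and $I_{k+1}$ is independent, every triangle consists of one edge of $\widetilde{\BT}(k)$ together with one vertex of $I_{k+1}$. The number of edges of $\widetilde{\BT}(k)$ is $k^2-1+2 = k^2+1$, so
\[
k_3(\BT(k)) = (k+1)(k^2+1),
\]
and dividing by $|V(\BT(k))|=(2k+1)+(k+1)=3k+2$ yields the stated formula for $\rho_3(\BT(k))$.

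Finally, by Lemma \ref{lem:count} the Tur\'an graph $\T{3k+1}{3}$ has one part of size $k+1$ and two parts of size $k$, so $k_3(\T{3k+1}{3}) = k^2(k+1)$ and $\rho_3(\T{3k+1}{3}) = \frac{k^2(k+1)}{3k+1}$. Cancelling $k+1$ and cross-multiplying, the desired inequality
\[
\frac{k^2+1}{3k+2} > \frac{k^2}{3k+1}
\]
reduces to $(k^2+1)(3k+1) - k^2(3k+2) = -k^2+3k+1>0$, which is a direct check at $k=2$ (value $3$) and $k=3$ (value $1$). The conclusion $f_3(2k+1,3) > \rho_3(\T{3k+1}{3})$ then follows since $\BT(k)\in \cG(2k+1,3)$.
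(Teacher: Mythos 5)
Your approach is the same as the paper's: count the triangles of $\BT(k)$ as (vertex of $I_{k+1}$) $\times$ (edge of $\BTt(k)$), compute $e(\BTt(k))=k^2+1$, compare with $\rho_3(\T{3k+1}{3})=k^2(k+1)/(3k+1)$, and check the inequality at $k=2,3$; all of that is correct. One claim in your verification that $\BT(k)\in\cG(2k+1,3)$ is wrong, though: $\BTt(k)$ is \emph{not} bipartite, and the new vertex $z$ cannot be ``placed in the part not containing $x$ and $y$,'' because the deleted edge $xy$ of $T_2(2k)=K_{k,k}$ has its endpoints in \emph{different} parts. Adjoining $z$ to $x$ and $y$ therefore creates odd cycles (indeed $\BTt(2)=C_5$, consistent with the paper's remark that $\BT(2)=C_5\vee I_3$). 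Fortunately the property your argument actually uses is only triangle-freeness of $\BTt(k)$, and that still holds: $K_{k,k}-xy$ is bipartite, and the only triangle through $z$ would have to use both of its neighbors $x$ and $y$, which are non-adjacent. With that justification substituted, your verification of $\w(\BT(k))\le 3$ and the triangle count go through, and the rest of your proof matches the paper's.
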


\begin{proof}
First note that any triangle in $\BT(k)$ has exactly one edge in $\BTt(k)$ and exactly one vertex in $E_{k+1}$. Therefore 
\[
	(3k+2)\rho_3(\BT(k)) = k_3(\BT(k)) = (k+1)e(\BTt(k)) = (k+1)(k^2 + 1).
\]

For $k = 2, 3$, we have
\[
	\rho_3(\T{3k+1}{3}) = \frac{k^2(k+1)}{3k+1} < \frac{(k+1)(k^2+1)}{3k+2} = \rho_3(\BT(k)).
\]
\end{proof}

\begin{conj}\label{conj:2k+1} We conjecture that for $k = 3$, $\ftdw{3}{2k+1}{3} = \rho_3(\BT(k))$.
\end{conj}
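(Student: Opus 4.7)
The plan is to establish $\rho_3(G) \le 40/11 = \rho_3(\BT(3))$ for every $G \in \cG(7,3)$; together with Theorem~\ref{thm:2k+1} this gives the conjectured equality. I would proceed by induction on $n = |V(G)|$. The base cases $n \le 10$ are handled by Lemma~\ref{lem:basecase}, which yields $\rho_3(G) \le \rho_3(\T{10}{3}) = 18/5 < 40/11$.

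For the inductive step, Lemma~\ref{lem:perfectvx} and Mantel's theorem classify each vertex by its 3-weight: $k_3(v) = e(N(v)) \le 12$, with equality iff $v$ is perfect ($N(v) \iso K_{3,4}$); if $d(v) = 7$ and $v$ is not perfect then $k_3(v) \le 11$; and if $d(v) \le 6$ then $k_3(v) \le 9$. The proof naturally splits on whether $G$ has a perfect vertex.

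In the no-perfect-vertex case every $k_3(v) \le 11$, giving $\rho_3(G) \le 11/3$. Since $11/3 - 40/11 = 1/33$ is very small, only a tiny improvement is needed: by Lemma~\ref{lem:kk-1} with $k = 11$, $\ell = 1$, $\D = 7$, if every vertex of weight $11$ has at least one neighbor of weight at most $10$, then $\rho_3(G) \le (11 - 1/8)/3 = 29/8 < 40/11$. The key structural subclaim to prove is thus: if $G$ has no perfect vertex and $v$ has $k_3(v) = 11$, then some neighbor $u \in N(v)$ has $k_3(u) \le 10$. To prove this I would suppose for contradiction that $v$ and all seven neighbors of $v$ have weight exactly $11$, so $N(v)$ and each $N(u)$ for $u \in N(v)$ is a copy of $K_{3,4} - e$, and then track how these seven ``missing edges'' are forced to interact under $\w(G) \le 3$; the resulting overlap pattern should produce a perfect vertex, contradicting the case hypothesis.

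The harder case is when a perfect vertex $v$ exists, so $N[v] \iso K_{1,3,4}$. Direct detachment via Lemma~\ref{lem:detach} fails: the bipartite border of $N[v]$ has $\w = 2$ and cross-degrees reaching $3$. In fact a direct count in $\BT(3)$ itself shows that each perfect vertex's closed neighborhood has $28$ crossing triangles, so $N[v]$ cannot be detachable in any extremal example. My plan is to work with the identity $k_3(G) = k_3(N[v]) + k_3(G \setminus N[v]) + C$, where $C$ counts crossing triangles, and to bound $C$ using that each crossing triangle forces specific adjacencies at border vertices whose totals are controlled by $\D \le 7$ and $\w \le 3$. Combined with the inductive bound on $\rho_3(G \setminus N[v])$ this would close the case. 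The main obstacle is that $C$ must be bounded sharply enough to admit equality exactly on $\BT(3)$-like graphs, since $C = 28$ already in that extremal example. If no such clean local bound exists, a natural fallback would be a global argument that characterizes the allowed configurations of perfect vertices in $\cG(7,3)$ and bounds their collective triangle contribution directly.
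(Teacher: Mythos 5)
This statement is Conjecture~\ref{conj:2k+1}: the paper itself offers no proof (Theorem~\ref{thm:2k+1} gives only the lower bound $\ftdw{3}{7}{3}\ge\rho_3(\BT(3))=40/11$), so your proposal has to stand on its own as a complete argument, and it does not. Your frame is sound as far as it goes: the base case via Lemma~\ref{lem:basecase} ($\rho_3(\T{10}{3})=18/5<40/11$), the weight classification ($k_3(v)\le 12$ with equality only for perfect vertices, $\le 11$ for imperfect degree-$7$ vertices, $\le 9$ otherwise), and the arithmetic that Lemma~\ref{lem:kk-1} with $k=11$, $\ell=1$, $\D=7$ yields $29/8<40/11$ are all correct. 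But the step that makes the no-perfect-vertex case work --- \emph{every} weight-$11$ vertex has a neighbor of weight at most $10$ whenever $G$ has no perfect vertex --- is exactly the kind of local structural claim that constitutes the actual difficulty, and you only assert that tracking the seven missing edges of the neighborhoods ``should produce a perfect vertex.'' Nothing in the paper's toolkit delivers this, and it is not obviously true; as stated it is an unproven lemma, not a proof step.

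The perfect-vertex case is worse: your own numbers show the proposed decomposition cannot close. With $N[v]\iso K_{1,3,4}$ you have $k_3(N[v])=12$, and induction gives $k_3(G)\le 12+C+\tfrac{40}{11}(n-8)$, which is at most $\tfrac{40}{11}n$ only if $C\le \tfrac{320}{11}-12=\tfrac{188}{11}<18$; yet you correctly compute $C=28$ in $\BT(3)$ itself. The slack in the extremal graph sits in $k_3(G\setminus N[v])$ being far below $\tfrac{40}{11}(n-8)$, which an inductive \emph{upper} bound on the remainder can never exploit, so ``bound $C$ sharply'' is not a repair --- the decomposition has to change (remove a larger $\BT(3)$-like structure, or use a reweighting in the style of Theorems~\ref{thm:354} and \ref{thm:365}), and your ``fallback global argument'' is not specified. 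Note also that imitating Theorem~\ref{thm:353} cannot suffice here: in $\BT(3)$ a perfect vertex has only its four weight-$10$ neighbors below weight $12$, so the best Lemma~\ref{lem:kk-1} can give with $k=12$ is $\ell=4$, i.e.\ $\tfrac13\bigl(12-\tfrac{4}{11}\bigr)=\tfrac{128}{33}>\tfrac{40}{11}$. So what you have is a plausible program with the two essential steps missing, one of them demonstrably unworkable in the form proposed; the conjecture remains open.
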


We are able to show that in the case $k=2$ the graph $\BT(2) = C_5\vee E_3$ is optimal.

\begin{theorem}\label{thm:353} 
    $\ftdw{3}{5}{3} = 15/8 = \rho_3(\BT(2))$.
\end{theorem}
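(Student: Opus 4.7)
The plan is to match the lower bound $\rho_3(\BT(2)) = 15/8$ provided by Theorem~\ref{thm:2k+1} (applied with $k = 2$) with a matching upper bound drawn from Lemma~\ref{lem:kk-1}. The arithmetic that makes this possible is that, with $t = 3$, $w(v) = k_3(v)$, $k = k_2(\T{5}{2}) = k_2(K_{3,2}) = 6$, $\ell = 3$, and $\Delta = 5$, the conclusion of Lemma~\ref{lem:kk-1} is exactly
\[
   \rho_3(G) \le \frac{1}{3}\parens[\big]{6 - \tfrac{3}{3+5}} = \frac{15}{8}.
\]
Lemma~\ref{lem:perfectvx} supplies the required pointwise bound $k_3(v) \le 6$ on every $v \in V(G)$, with equality exactly for perfect $v$ (those whose open neighborhood is $K_{3,2}$). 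So the whole task reduces to showing that in every $G \in \cG(5,3)$ each perfect vertex has at least three non-perfect neighbors.

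To prove this structural claim, fix a perfect vertex $v$ and write $N(v) = A \cup B$ with $A = \set{a_1,a_2,a_3}$ and $B = \set{b_1,b_2}$ the two sides of $N(v) \iso K_{3,2}$. I will show that each $a_i \in A$ must be non-perfect. Suppose for contradiction $a_1$ is perfect. The vertices $a_2,a_3$ lie in the same $K_{3,2}$-part as $a_1$ and hence are non-neighbors of $a_1$, so $N(a_1) \cap N[v] = \set{v,b_1,b_2}$; together with $\deg(a_1) = 5$ this forces two further neighbors $c,d \notin N[v]$ with $c \neq d$, and $N(a_1) = \set{v,b_1,b_2,c,d} \iso K_{3,2}$. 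Within this $K_{3,2}$ the three known non-edges are $b_1b_2$, $vc$, and $vd$, and since non-edges of $K_{3,2}$ must lie inside a single part, $v,c,d$ are forced into one common part and $b_1,b_2$ into another; the size constraint $\set{3,2}$ pins this down uniquely as $\set{v,c,d}$ being the size-$3$ part and $\set{b_1,b_2}$ being the size-$2$ part. In particular, both $c$ and $d$ are adjacent in $G$ to $b_1$.

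Now I will derive the contradiction from the degree of $b_1$. The vertex $b_1$ already has the four neighbors $v,a_1,a_2,a_3$ inside $N[v]$, so $\Delta(b_1) \le 5$ leaves $b_1$ at most one neighbor outside $N[v]$. The distinct vertices $c,d \notin N[v]$ cannot both qualify, so $c = d$, a contradiction. Hence $a_1$ is not perfect, and by symmetry neither are $a_2,a_3$, giving the desired three non-perfect neighbors of $v$ and completing the verification of the hypothesis of Lemma~\ref{lem:kk-1}. Combining the resulting upper bound $\rho_3(G) \le 15/8$ with the lower bound $\rho_3(\BT(2)) = 15/8$ yields $\ftdw{3}{5}{3} = 15/8$.

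The one step that requires genuine care is the rigidity argument that forces the bipartition of the hypothetical $N(a_1) \iso K_{3,2}$: one must confirm that no other assignment of $\set{v,b_1,b_2}$ to the two parts is consistent with the known edges and non-edges before invoking the degree bound at $b_1$. Everything else is bookkeeping on top of Lemmas~\ref{lem:perfectvx} and~\ref{lem:kk-1}.
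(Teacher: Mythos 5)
Your proof is correct and takes essentially the same approach as the paper: the same application of Lemma~\ref{lem:kk-1} with $k=6$, $\ell=3$, $t=3$, and the same structural claim that the three neighbors of a perfect vertex lying in the size-3 part of its neighborhood cannot be perfect, with the contradiction obtained from the degree of a size-2-part vertex exceeding $5$. The only cosmetic difference is that you let the hypothesis of Lemma~\ref{lem:kk-1} hold vacuously when there is no perfect vertex, whereas the paper treats that case separately via Lemma~\ref{lem:ktvbound}; both are fine.
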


\begin{proof}
Let $G \in \cG(5,3)$.
Suppose that $G$ has an optimal vertex $v$, so $N(v) \iso \T52$ and $k_3(v) = 6$. Let $X = \set{x_1, x_2, x_3}$ and $Y = \set{y_1, y_2}$ be the parts of $N(v)$. Then we will show $k_3(x_i) \le 5$ for all $i$.

Suppose to the contrary that $k_3(x) = 6$ for some $x = x_i$, so (by Lemma~\ref{lem:perfectvx}) $x$ is optimal and $N(x) \iso \T{5}{2}$. $N(x)$ contains the independent set $Y = \set{y_1,y_2}$, as well as $v$, which is adjacent to $y_1$ and $y_2$. The vertex $v$ already has the maximum degree, so $Y$ cannot be a subset of the part of size 3 in $N(x)$, and so $Y$ is the part of size 2 in $N(x)$. The vertices $y_i$ then have six neighbors ($x_1, x_2, x_3, v$, and two more vertices in $N(x)$), contradicting $\D(G) \le 5$.

Thus if there is a vertex $v$ with $k_3(v) = 6$, then at least 3 of its neighbors $x$ have $k_3(x) \le 5$. Applying Lemma \ref{lem:kk-1} with $w=k_3$, $\D = 5$, $k=6$, $\ell = 3$, and $t=3$ yields $\rho_3(G) \le \frac{15}{8}$. This upper bound is achieved by $C_5\vee E_3$, which has 8 vertices and 15 triangles, 5 at each vertex of $E_3$. If $G$ has no optimal vertex, then by Lemma \ref{lem:ktvbound}, $\rho_3(G) \le \frac53 < \frac{15}{8}$.
\end{proof}

\subsection{Computing $\ftdw354$ and $\ftdw365$}
\label{sub:computing_ftdw354_and_ftdw365}

In the remainder of this section we establish the values of $\ftdw354$ and $\ftdw365$. The extremal graphs are rather different, and we are uncertain about how these results might generalize.

We start with a lemma concerning graphs in $\cG(5,4)$.

\begin{lemma}\label{lem:7nbrs}In a graph $G \in \cG(5,4)$, every vertex $v$ with $k_3(v) = 7$ has a neighbor $x$ with $k_3(x) \le 5$.\end{lemma}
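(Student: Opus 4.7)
The plan begins by pinning down the structure of $N = N(v)$. Since $k_3(v) = k_2(N)$ and $\w(N) \le \w(G)-1 = 3$ while $|N| \le \D = 5$, we need a graph on at most $5$ vertices with clique number at most $3$ and exactly $7$ edges. On $\le 4$ vertices such a graph has at most $5$ edges (it is a subgraph of $K_4 - e$), so $|N| = 5$. Enumerating the relevant $5$-vertex graphs via their complements---which must have $3$ edges and no independent $4$-set---yields exactly three candidates: the complete tripartite $K_{1,1,3}$, and the two non-isomorphic single-edge deletions of $\T{5}{3} \iso K_{2,2,1}$ (deleting an edge joining the singleton part to a $2$-part, or an edge between the two $2$-parts).

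Having narrowed down the shape of $N$, I would select a vertex $x \in N$ of small degree in $H = N[v]$ and bound $k_3(x) = |E(N(x))|$ by a spare-capacity argument. Write $s(u) = 5 - d_H(u)$ for $u \in V(H)$; then $\dx(u) \le s(u)$, and vertices with $s(u) = 0$ are saturated and have no neighbors outside $H$. The count $|E(N(x))|$ decomposes into three parts: edges inside $N(x) \cap V(H)$, edges from $N(x) \cap V(H)$ to the external neighbors of $x$ (bounded by $\sum_{u \in N(x) \cap V(H)} s(u)$), and edges among the external neighbors of $x$ (bounded by $\binom{\dx(x)}{2}$).

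For each of the three shapes I would verify this bound is at most $5$. In $K_{1,1,3}$, take $x$ in the part of size $3$, whose in-$H$ neighbors $v, a, b$ are all saturated and form a triangle, giving $3+0+1 = 4$. In $\T{5}{3}$ minus edge $a b_1$ (singleton $a$ to size-$2$ vertex $b_1$), take $x = b_1$; its in-$H$ neighbors $\{v, c_1, c_2\}$ contribute $2$ edges and have $s(c_1) = s(c_2) = 1$, giving $2 + 2 + 1 = 5$. In $\T{5}{3}$ minus edge $b_1 c_1$ (between the two $2$-parts), take $x = b_1$; its in-$H$ neighbors $\{v, a, c_2\}$ form a triangle with $s(c_2) = 1$, giving $3 + 1 + 1 = 5$. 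The main obstacle is the initial structural enumeration of $N(v)$; once that is in hand, the subsequent bookkeeping is mechanical.
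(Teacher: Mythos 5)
Your proposal is correct and follows essentially the same route as the paper: the same enumeration (via complements with $3$ edges and no independent $4$-set) of the three possible neighborhoods $K_5-K_3$, $K_5-P_4$, $K_5-(P_3\cup K_2)$, the same choice of the low-degree vertex $x$ in each case, and the same count of $e(N(x))$ split into edges inside $N[v]$, edges using leftover degree of saturated/unsaturated neighbors, and edges among the at most two external neighbors. Your ``spare capacity'' bookkeeping just formalizes the degree arguments the paper states case by case.
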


\begin{proof}
Suppose $k_3(v) = 7$. Then $N(v)$ is a graph with no $K_4$'s, exactly $7$ edges on at most $5$ (and hence exactly $5$ vertices). There are only three such graphs. This can be seen most easily from their complements, which have $5$ vertices, no $E_4$'s, and 3 edges. No vertex has degree $\ge 3$ because then the other four vertices would form an $E_4$. The graphs with maximum degree at most 2 with 3 edges on 5 vertices are $P_3\cup K_2$, $P_4\cup K_1$, and $K_3\cup E_2$.

\begin{case}$N(v) = K_5 - (P_3\cup K_2)$.\end{case}

Let $x$ be the vertex at the center of the deleted $P_3$. Consider $N(x)$. It contains a $P_3$ with $v$ in the center and has at most two more vertices. The two other vertices may be adjacent to each other. Each of the endpoint vertices of the $P_3$ has degree 4 in $N[v]$ so has at most one edge to these two other vertices. Therefore $k_3(x) = e(N(x)) \le 5$.

\begin{case}$N(v) = K_5 - P_4$.\end{case}

Let $u$ be the vertex in $N(v)$ not in the $P_4$, and label the other four vertices $w$, $x$, $y$, $z$ in order along the $P_4$. Consider $N(x)$. It consists of the triangle $\{u,v,z\}$ and up to two vertices outside $N[v]$. The vertex $z$ has four neighbors in $N[v]$ so at most one more edge in $N(x)$, and the vertices $u$ and $v$ already have degree $5=\D(G)$. Therefore $k_3(x) = e(N(x)) \le 5$.

\begin{case}$N(v) = K_5 - K_3$.\end{case}

Within $N(v)$, there are three vertices of degree 2 and two vertices of degree 4. Let $x$ be one of the vertices of degree 2 and $y$ and $z$ be the vertices of degree 4. All of these vertices are also adjacent to $v$, so in $G$, $d(y) = d(z) = d(v) = 5 = \D(G)$, and we cannot add any edges to $v$, $y$, $z$, or $N(v)$. Consider $N(x)$. It contains the triangle $vyz$ and has at most two more vertices, which may be adjacent to each other, but not to $v$, $y$, or $z$. Therefore $k_3(x) = e(N(x)) \le 4$.
\end{proof}

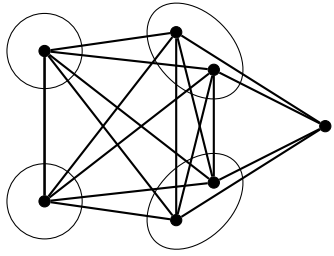
\begin{figure}
    \tikzstyle{vx}=[inner sep=1.5pt,circle,fill=black,draw=black]
    \tikzstyle{edge}=[thick]
    \begin{center}
    \begin{tikzpicture}
    		\draw (1,3) circle [radius=.5];
    		\draw (1,1) circle [radius=.5];
    		\draw[rotate around={45:(3,3)}] (3,3) ellipse [x radius=.5, y radius=.75];
    		\draw[rotate around={-45:(3,1)}] (3,1) ellipse [x radius=.5, y radius=.75];
    		\foreach \x/\y [count=\i] in {2.75/3.25, 3.25/2.75, 3.25/1.25, 2.75/.75, 1/3, 1/1, 4.732/2}
        {
            \node[vx] (V\i) at (\x,\y) {};
        }
    		\foreach \i in {1,2,3,4,5,6}
    		{
    			\foreach \j in {5,6}
    			{
    				\ifthenelse{\i=\j}{}{\draw[edge] (V\i)--(V\j)};
    			}
    		}
    		\draw[edge] (V1)--(V3);
    		\draw[edge] (V1)--(V4);
    		\draw[edge] (V2)--(V3);
    		\draw[edge] (V2)--(V4);
    		\foreach \i in {1,2,3,4}{
    			\draw[edge] (V\i)--(V7);
    		}
    \end{tikzpicture}
    \end{center}
    \caption{The graph $G^*$.}
    \label{fig:354}
\end{figure}

The optimal graph for achieving $\ftdw354$ is constructed by taking $K_6$ minus a matching of size $2$, and then joining the vertices incident to matching edges to a new vertex. This graph $G^*$ is illustrated in Figure \ref{fig:354}.

\begin{theorem}\label{thm:354}
    $\ftdw{3}{5}{4} = 16/7 = \rho_3(G^*)$.
\end{theorem}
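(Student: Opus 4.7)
The plan is to prove, by induction on $n = |V(G)|$, that $\rho_3(G) \le 16/7$ for every $G \in \cG(5,4)$, with equality for $G = G^*$. I would first verify $\rho_3(G^*) = 16/7$ by direct counting: $G^*$ has $7$ vertices and $k_3(G^*) = 16$, namely the twelve triangles inside the $K_6-2K_2$ core plus four triangles through the apex vertex $V_7$, one for each edge among $V_1V_3, V_1V_4, V_2V_3, V_2V_4$. For the base case $n \le 6$, Lemma~\ref{lem:basecase} gives $\rho_3(G) \le \rho_3(L(5,4)) = \rho_3(\T{6}{4}) = 2 < 16/7$.

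For the inductive step $n \ge 7$, I would split on whether $G$ has a perfect vertex. If it does not, then $k_3(v) \le 7$ for every $v$, and Lemma~\ref{lem:7nbrs} supplies, for each $v$ with $k_3(v)=7$, a neighbor $x$ with $k_3(x) \le 5 \le 6$. Applying Lemma~\ref{lem:kk-1} with $k=7$, $\ell=1$, $\D=5$, $t=3$ then yields
$\rho_3(G) \le \tfrac13\bigl(7-\tfrac16\bigr) = \tfrac{41}{18}$,
and $\tfrac{41}{18} < \tfrac{16}{7}$ since $287 < 288$.

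If $G$ has a perfect vertex $v$, then $N(v)\iso\T{5}{3}$; label its parts $A=\{a_1,a_2\}$, $B=\{b_1,b_2\}$, $C=\{c\}$. In $N[v]$, both $v$ and $c$ have degree 5, while the border vertices are exactly $a_1,a_2,b_1,b_2$, each with $\dx \le 1$. A short case check identifies precisely which cross-edges lie in triangles: a triangle through $u\text{-}w$ (with $u$ a border vertex, $w\notin N[v]$) needs a third vertex adjacent to both, and the only possibility is another border vertex $u'$ in the opposite part of $\{A,B\}$ whose unique cross-edge also goes to $w$. Thus cross-edge triangles are exactly triples $\{a_i,b_j,w\}$ where some external $w$ is a common cross-neighbor of some $a_i$ and some $b_j$. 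If no such coincidence occurs, $N[v]$ is detachable and $\rho_3(N[v]) = 12/6 = 2 \le 16/7$, closing the case by induction. The critical configuration is that some external $w$ is adjacent to all four border vertices: then $H:= N[v]\cup\{w\}\iso G^*$, each $a_i,b_j$ now has degree 5 in $H$, and the only border vertex of $H$ is $w$ with $\dx(w)\le 1$. Lemma~\ref{lem:detach} with $i=j=1$ makes $H$ detachable for $t\ge 3$, and since $\rho_3(H) = 16/7$, induction closes this case with equality.

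The main obstacle is handling intermediate coincidences, i.e., patterns in which some but not all four border vertices share cross-neighbors, possibly at several distinct external vertices. For each such pattern I would absorb the shared externals (and, if needed, a few further vertices) into $H$ and verify by direct enumeration of triangles on cross-edges that $H$ is detachable and $\rho_3(H) \le 16/7$, rather than relying on Lemma~\ref{lem:detach} whose hypothesis on $i+j$ will generally be too weak when $t=3$. Finiteness of the analysis is ensured by $\dx\le 1$ at each border vertex and the at most $\binom42 = 6$ possible border-pair coincidences; each resulting configuration either collapses to an induced subgraph of $G^*$ (giving a detachable $H$ with $\rho_3(H) < 16/7$) or is handled by a direct vertex-weight argument analogous to Case~1.
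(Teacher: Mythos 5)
Your base case, your no-perfect-vertex case (Lemma~\ref{lem:7nbrs} plus Lemma~\ref{lem:kk-1} with $k=7$, $\ell=1$, giving $41/18<16/7$), and your two extreme perfect-vertex subcases (no coincidence of cross-neighbors, hence $N[v]$ detachable; or one external vertex joined to all four border vertices, giving $H\iso G^*$ detachable) are all correct and agree with parts of the paper's argument. But the ``intermediate coincidences'' that you defer to a finite check are not a routine verification---they are exactly the hard part of the paper's proof, and your proposed treatment of them does not work as stated. The critical pattern is two distinct external vertices $t,u$ with, say, $t$ adjacent to $a_1,b_1$ and $u$ adjacent to $a_2,b_2$ (the paper's ``8-configuration''). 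Absorbing them gives $H=N[v]\cup\{t,u\}$ with $k_3(H)=14$ on $8$ vertices, but $H$ is \emph{not} detachable: $t$ and $u$ each have only two or three neighbors in $H$, hence up to three further neighbors outside $H$, and can lie in up to three cross-triangles each. The crude bound $k_3(H)+\#\{\text{cross-triangles}\}\le 14+6=20$ exceeds $\tfrac{16}{7}\cdot 8=\tfrac{128}{7}\approx 18.3$, so induction fails; and this configuration is not an induced subgraph of $G^*$, so that part of your dichotomy does not apply. Nor does a ``weight argument analogous to Case~1'' apply directly, because in this case $G$ \emph{does} contain perfect vertices, whose weight $k_3(v)=8$ exceeds the threshold $k=7$ needed for Lemma~\ref{lem:kk-1}. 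Moreover, absorbing ``a few further vertices'' has no a priori termination: the external neighbors of $t$ and $u$ are unconstrained by your enumeration of the six border-pair coincidences, which only controls the picture inside $N[v]\cup\{t,u\}$.

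The paper closes precisely this gap with machinery you have not supplied: it first assumes WLOG that $G$ is triangle-maximal on its vertex set and that every edge lies in a triangle (forcing the cross-edges at $w,x,y,z$ to create coincidences), handles separately the case where a border vertex belongs to two 8-configurations (then it has only one cross-triangle, giving enough slack), and otherwise runs a global weight-redistribution: each perfect vertex in an 8-configuration donates $2$ to the configuration's two border vertices, which satisfy $k_3\le 4$ and hence end up with weight at most $6$; after this transfer every vertex has weight at most $7$, weight-$7$ vertices are genuinely non-perfect so Lemma~\ref{lem:7nbrs} applies, and Lemma~\ref{lem:kk-1} (with the modified weight function, $k=7$, $\ell=1$) yields $\rho_3(G)\le 41/18<16/7$. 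Without this redistribution idea (or a worked-out substitute), your induction does not go through in the two-external-vertex case, so the proposal has a genuine gap at the decisive step.
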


\begin{proof}
First note that $\rho_3(G) \le \rho_3(\T64)$ for all graphs $G$ on at most $6$ vertices by Lemma \ref{lem:basecase}, and $\rho_3(\T64) < \rho_3(G^*)$. Suppose $G \in \cG(5,4)$ and set $n = \abs{V(G)}$. We may assume that $G$ attains the maximum number of triangles among graphs with $n$ vertices in $\cG(5,4)$, and further that every edge of $G$ is in a triangle: any edge of $G$ not in a triangle may be deleted without changing the number of triangles, the number of vertices, $\D(G) \le 5$, and $\w(G) \le 4$.

Suppose $G$ has an optimal vertex $v$, so $N(v) \iso \T53$ and $k_3(v) = 8$. Consider the possible continuations of $N[v] \iso\T64$ in $G$. The two vertices in parts of size one already have the maximum degree, 5, and 3-weight 8. The four vertices in parts of size two $\set{w, x}$ and $\set{y, z}$ have four neighbors in the $\T64$ so may have one more neighbor each. If there is a vertex $u \in G\setminus N[v]$ adjacent to any of these four border vertices, without loss of generality say $w$, then $u$ must also be adjacent to $y$ or $z$ so that $uw$ is in a triangle.

If there is at most one triangle containing vertices in both $N[v]$ and $G\setminus N[v]$, then $k_3(G) \le 12 + 1 + \frac{16}7(n-6) < \frac{16}7n$ by induction on $n$. Otherwise, there are at least two triangles containing vertices in both $N[v]$ and $G\setminus N[v]$. If $w$, $x$, $y$, and $z$ have only one neighbor $u$ outside $N[v]$, then $u$ must be adjacent to three or four of them. In either case, let $H = G[N[v]\cup\set{u}]$. If $u$ has three neighbors in $N(v)$, then $k_3(H) = 14$ and $u$ has at most two neighbors outside $N[v]$, so $k_3(G) \le 14 + 1 + \frac{16}{7}(n-7) < \frac{16}7n$ by induction. If $u$ has four neighbors in $N(v)$, then $H = G^*$ and $k_3(H) = 16$. The only border vertex, $u$, has at most one neighbor outside $N[v]$, so $H$ is detachable by Lemma \ref{lem:detach}. We have $\rho_3(H) = \frac{16}7$, and by induction $\rho_3(G\setminus H) \le \frac{16}7$, so by Observation \ref{obs:induction}, $\rho(G)\le \frac{16}7$.

The vertices $w$, $x$, $y$, and $z$ cannot have more than two neighbors outside $N[v]$ since every edge is in a triangle. We have shown that $\rho_3(G) \le \frac{16}7$ for every $G$ except when for every optimal vertex $v$, the vertices $w$, $x$, $y$, and $z$ have two neighbors outside $N[v]$, $t$ and $u$. Without loss of generality, $G$ has the edges $tw, ty, ux,$ and $uz$. The vertices $t$ and $u$ may or may not be adjacent. We call $C \subseteq V(G)$ an \emph{$8$-configuration} if $C$ has size 8 and is isomorphic to $N[v]\cup\set{t,u}$ described above, where $t$ and $u$ may or may not be adjacent (see Figure~\ref{fig:8conf}).

\begin{figure}
    \tikzstyle{vx}=[inner sep=1.5pt,circle,fill=black,draw=black]
    \tikzstyle{edge}=[thick]
    \begin{center}
    \begin{tikzpicture}
		\draw (1,3) circle [radius=.5];
		\draw (1,1) circle [radius=.5];
		\draw[rotate around={45:(3,3)}] (3,3) ellipse [x radius=.5, y radius=.75];
		\draw[rotate around={-45:(3,1)}] (3,1) ellipse [x radius=.5, y radius=.75];
		\foreach \x/\y [count=\i] in {2.75/3.25, 3.25/2.75, 3.25/1.25, 2.75/.75, 1/3, 1/1}
        {
            \node[vx] (V\i) at (\x,\y) {};
        }
		\foreach \i/\lab/\x/\y in {7/u/4.5/0.75, 8/t/4.5/3.25}
        {
            \node[vx, label=right:$\lab$] (V\i) at (\x,\y) {};
        }
		\foreach \i in {1,2,3,4,5,6}
		{
			\foreach \j in {5,6}
			{
				\ifthenelse{\i=\j}{}{\draw[edge] (V\i)--(V\j)};
			}
		}
		\draw[edge] (V1)--(V3)
		            (V1)--(V4)
		            (V2)--(V3)
		            (V2)--(V4)
                    (V2)--(V7)
                    (V1)--(V8)
                    (V4)--(V7)
                    (V3)--(V8);
        \draw[edge, dashed] (V7)--(V8);
    \end{tikzpicture}
    \end{center}
    \caption{An $8$-configuration. The dashed edge may or may not be present.}
    \label{fig:8conf}
\end{figure}

In an $8$-configuration $C$, $t$ and $u$ are the border vertices. They each have at least two neighbors in $C$, so at most three neighbors outside $C$, and no more than three cross-triangles. If $H$ shares a border vertex with another $8$-configuration, then that border vertex has only one cross-triangle. Then $k_3(G[C]) = 14$ and $k_3(G) \le 14 + 3 + 1 + \frac{16}7(n-8) < \frac{16}7n$ by induction. Therefore we may assume every border vertex of an $8$-configuration is in only one $8$-configuration.

Assign a weight to each vertex $v$ in $G$ by \[w(v)=\begin{cases}k_3(v)+2&\text{if $v$ is a border vertex of an $8$-configuration}\\6& \text{if $k_3(v) = 8$, i.e. $v$ is optimal}\\k_3(v)&\text{otherwise.}\end{cases}\] We have assumed that every optimal vertex $v$ is in an $8$-configuration. Each $8$-configuration has two optimal vertices and two border vertices, so \[k_3(G) = \frac13\sum_{v \in V(G)}k_3(v) = \frac13\sum_{v \in V(G)}w(v).\]

A border vertex $u$ of an $8$-configuration has $k_3(u) \le 4$, so $w(u) \le 6$. Every vertex $v$ with $w(v) = 7$ has $k_3(v) = 7$ so by Lemma \ref{lem:7nbrs} has a neighbor $x$ with $k_3(x) \le 6$ and $w(x) \le 6$. Using the function $w$, $t=3$, $\D = 5$, $\ell = 1$, and $k = 7$, Lemma \ref{lem:kk-1} gives $\rho_3(G) \le \frac{41}{18} <\frac{16}7$.
\end{proof}

Finally we determine $\ftdw365$, which is in fact attained by a graph having no $K_5$'s at all! 

\begin{theorem}\label{thm:365}
    $\ftdw365 = 4 = \rho_3(\T84)$.
\end{theorem}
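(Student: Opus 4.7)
The plan is to establish $\rho_3(G) \le 4$ for every $G \in \cG(6,5)$ by induction on $n = |V(G)|$; the matching lower bound $\ftdw{3}{6}{5} \ge 4$ comes from the fact that $\T{8}{4}\iso K_{2,2,2,2}$ is $6$-regular with $\w = 4$, hence lies in $\cG(6,5)$, and satisfies $\rho_3(\T{8}{4}) = 32/8 = 4$. The base cases $n \le 7$ follow from Lemma~\ref{lem:basecase}, which gives $\rho_3(G) \le \rho_3(L(6,5)) = \rho_3(\T{7}{5}) = 25/7 < 4$. For the inductive step, fix $G \in \cG(6,5)$ with $n \ge 8$. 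If $G$ has no perfect vertex, then Lemma~\ref{lem:perfectvx} yields $k_3(v) \le 12$ for every vertex $v$, and Lemma~\ref{lem:ktvbound} immediately gives $\rho_3(G) \le 4$.

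So assume $v$ is a perfect vertex and let the parts of $N(v) \iso \T{6}{4}$ be $\{a_1,a_2\}$, $\{b_1,b_2\}$, $\{c\}$, $\{d\}$. Then $H := N[v] \iso \T{7}{5}$ (i.e., $K_7$ with the matching $\{a_1a_2, b_1b_2\}$ removed), so $k_3(H) = 25$. Since $v, c, d$ already have degree $6$ inside $H$, the border vertices of $H$ are exactly $a_1, a_2, b_1, b_2$, each with at most one cross-edge. Any crossing triangle can meet $H$ only in border vertices, and since each border vertex has at most one outside neighbor, every crossing triangle must have the form $(a_i, b_j, z)$ where $z \in V(G) \wo H$ is a common outside neighbor of $a_i$ and $b_j$. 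A short case analysis using $\sum_z |\{i : a_i \sim z\}| \le 2$ and $\sum_z |\{j : b_j \sim z\}| \le 2$ shows that the number $T_G$ of crossing triangles lies in $\{0,1,2,4\}$; in particular $T_G = 4$ forces a single outside vertex $z$ to be adjacent to all four of $a_1, a_2, b_1, b_2$.

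If $T_G \le 2$, the decomposition $k_3(G) = k_3(H) + k_3(G \wo H) + T_G$ combined with the inductive hypothesis yields $k_3(G) \le 25 + 4(n-7) + 2 = 4n - 1$, so $\rho_3(G) < 4$. If $T_G = 4$, enlarge to $H' := H \cup \{z\}$: then $|H'| = 8$, and $z$ contributes exactly the four triangles $(z, a_i, b_j)$, so $k_3(H') = 29$. Each of $a_1, a_2, b_1, b_2$ now has all six of its neighbors inside $H'$, so $z$ is the unique border vertex of $H'$; since $z$ has at most $6 - 4 = 2$ cross-edges, it contributes at most one further crossing triangle $(z, w_1, w_2)$. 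Hence $k_3(G) \le 29 + 4(n - 8) + 1 = 4n - 2$ and again $\rho_3(G) < 4$. The main obstacle is that $H$ fails to be detachable for $t = 3$---the border graph is $K_{2,2}$ with cross-degree at most $1$, precisely matching the $i+j = 3$ threshold of Lemma~\ref{lem:detach}---so one must enumerate crossing triangles explicitly and, in the extremal $T_G = 4$ case, pass to the enlarged $8$-vertex subgraph $H'$ whose crossing-triangle count is at most $1$.
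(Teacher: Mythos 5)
Your proof is correct, but it takes a genuinely different route from the paper. The paper does not induct on $n$ in the main argument: it observes that a perfect vertex has $k_3(v)=13$ while each border vertex $x$ of a copy of $\T{7}{5}$ (a ``configuration'') has $k_3(x)\le 11$ and at most one cross-edge, shows that distinct configurations are vertex-disjoint, and then runs a local weight-shifting (discharging) argument: each of the three perfect vertices of a configuration donates $1$, each of the four border vertices receives $\tfrac34$, so every vertex ends with weight at most $12$ and $\rho_3(G)\le \tfrac{12}{3}=4$ directly, with strict inequality whenever a perfect vertex is present. You instead induct on $n$, take $H=N[v]\iso \T{7}{5}$ around a perfect vertex, and, since $H$ just misses the detachability threshold of Lemma~\ref{lem:detach} for $t=3$, you enumerate the crossing triangles explicitly: your observation that each border vertex has at most one outside neighbor correctly forces every crossing triangle to be of the form $(a_i,b_j,z)$ and pins the crossing count to $\set{0,1,2,4}$, and in the extremal case you pass to the $8$-vertex graph $H'=H\cup\set{z}$ with $k_3(H')=29$ and at most one crossing triangle, giving $k_3(G)\le 4n-1$ or $4n-2$. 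Both arguments are sound; the paper's weighting avoids any analysis of how the neighborhood attaches to the rest of the graph (at the cost of the disjointness claim for configurations), while your induction is more elementary and self-contained, handling the non-detachable case by direct enumeration and an enlargement step. Your treatment of the no-perfect-vertex case via Lemmas~\ref{lem:perfectvx} and \ref{lem:ktvbound}, the base cases via Lemma~\ref{lem:basecase}, and the lower bound $\rho_3(\T{8}{4})=4$ all match what is needed.
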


\begin{proof}
First note that $\rho_3(G) \le \rho_3(\T75)$ for all graphs $G$ on at most $7$ vertices by Lemma \ref{lem:basecase}, and $\rho_3(\T75) < \rho_3(\T84)$. 
Suppose $G$ has an optimal vertex $v$, so $N(v) \iso \T64$ and $k_3(v) = 13$. Then $N[v] \iso\T75$. The vertices in the parts of size 1 all are optimal. The border vertices are the vertices in the parts of size 2. Any border vertex $x$ has in its neighborhood a $\T54$ from $N[v]$ and at most one more vertex, which cannot be adjacent to the optimal vertices in $N[v]$. Therefore $k_3(x) = e(N(x)) \le 11$.

We call $C \subseteq V(G)$ a \emph{$7$-configuration} if $G[C] \iso \T75$. Assign a weight to each vertex $v$ in $G$ by \[w(v)=\begin{cases}12&\text{if $k_3(v) = 13$, i.e. $v$ is optimal}\\k_3(v)+\frac34& \text{if $v$ is a border vertex of a $7$-configuration}\\k_3(v)&\text{otherwise.}\end{cases}\]

A border vertex $x$ of a $7$-configuration can have at most one cross-edge and cannot have an optimal neighbor $y$ outside the $7$-configuration. If it did, then it would also have to share some of $y$'s neighbors, since in $N[y] \iso \T75$ every vertex has degree at least five, contradicting $d(x) \le 6$. Therefore the $7$-configurations are disjoint, with three optimal vertices for every four border vertices of $7$-configurations, and \[k_3(G) = \frac13\sum_{v \in V(G)}k_3(v) = \frac13\sum_{v \in V(G)}w(v).\]

Observe that $w(v) \le 12$ for all $v$ since the border vertices of $7$-configurations $x$ have $k_3(x) \le 11$, so $\frac13\sum_{v \in V(G)}w(v) \le 4n$ and $\rho_3(G) \le 4$. If $G$ contains an optimal vertex, then in addition there must be some vertex $v$ (in its $7$-configuration) with $w(v) < 12$, and $\rho_3(G) < 4$. This upper bound is achieved by $\T84$, which has $k_3(v) = 12$ for every vertex $v$.
\end{proof}

\section{Open Problems}\label{sec:open}

The results of this paper can be summarized in the table below, where each entry indicates the section in which the extremal graph is determined. For results from Sections~\ref{sec:bounds} and \ref{sec:d=w} the extremal graphs are the same for each $3 \le t \le \w$. From Section~\ref{sec:max} our results determine only $\ftdw{r-1}{r+1}{r}$ and $\ftdw{r}{r+1}{r}$.
\medskip
\begin{center}
\begin{tabular}{rccccccccc}\toprule
     & \multicolumn{8}{c}{$\D$} \\ \cmidrule(r){2-9} 
$\w$ & 3 & 4 & 5 & 6 & 7 & 8 & 9 & 10\\ \midrule
3 & \ref{sec:d=w} & \ref{sec:bounds} & \ref{sec:other} & \ref{sec:bounds} &  & \ref{sec:bounds} &  & \ref{sec:bounds}\\
4 & \ref{sec:bounds} & \ref{sec:d=w} & \ref{sec:other} & \ref{sec:bounds} &  &  & \ref{sec:bounds} & \\
5 &                  & \ref{sec:bounds} & \ref{sec:d=w} & \ref{sec:other} &  & \ref{sec:bounds} &  &\\
6 &                   &                   & \ref{sec:bounds} & \ref{sec:d=w} & \ref{sec:max} &  &  &  \ref{sec:bounds} \\
7 &                   &                   &                   & \ref{sec:bounds} & \ref{sec:d=w} & \ref{sec:max} & &   \\
8 &                  &                   &                  &                  & \ref{sec:bounds} & \ref{sec:d=w} & \ref{sec:max} &  \\
9 &                  &                  &                  &                  &                  & \ref{sec:bounds} & \ref{sec:d=w} & \ref{sec:max} \\
10 &                  &                  &                  &                  &                  &                  & \ref{sec:bounds} & \ref{sec:d=w} \\ \bottomrule
\end{tabular}
\end{center}
\medskip
Of course, we would like to complete the entire table for all $t$. Given the $(\w-1)$-periodicity of the results of Section \ref{sec:bounds}, it might be possible to generalize the results of Section \ref{sec:d=w} to find $\ftdw{t}{a(r-1)+1}{r}$ for integers $a > 1$, or the results of Section \ref{sec:max} to find $\ftdw{t}{a(r-1)+2}{r}$ for integers $a > 1$ and $t = r-1$ or $r$. The upper and lower bounds appear to be closer for smaller values of $\D \mod \w-1$.

In Sections \ref{sec:bounds}, \ref{sec:d=w}, and \ref{sec:max}, we found that the extremal graphs were all the lower bound graph $L(\D,\w)$ from Section~\ref{sec:bounds}, of the form $\T{\D+a}{\w}$, where $a = \floor{\frac{\D}{\w-1}}$. However, we also identified triples $(t,\D,\w)$ where $\T{\D+a}{\w}$ is not extremal. For $(3,5,3)$, $(3,5,4)$, and $(3,6,5)$, we determined the extremal graphs. Theorem~\ref{thm:2k+1} gives a graph in $\cG(7,3)$ that has more triangles per vertex than $L(7,3)$. Is this graph extremal? What conditions on $(t,\D,\w)$ would imply that $\T{\D+a}{\w}$ is or is not extremal?

When restricting $\w(G)$ but not $\D(G)$, Zykov's Theorem shows that the same graphs maximize the number of cliques of every size. Similarly, when restricting $\D(G)$ but not $\w(G)$, Gan, Loh, and Sudakov conjecture that the same graphs maximize the number of cliques of every size $t \ge 3$, and this conjecture has been proven in significant cases. A corollary of the Kruskal-Katona Theorem shows that among graphs on $m$ edges, the colex graph $\cC(m)$ maximizes the number of cliques of every size. It is surprising then that for some pairs $(\D,\w)$, namely $(5,4)$ and $(6,5)$, our extremal graphs are not the same for every $t$. It would be nice to determine when one graph works for all $t$.

Finally, we have not established even that for all relevant values of $t, \D$, and $\w$ there is a graph $G$ achieving $\rho_t(G)=\ftdwdef$. We would like to prove the following conjecture.
\begin{conj}
    For all $t\ge 3$ and $3\le \w \le \D+1$ there exists a graph $G\in \cG(\D,\w)$ such that $\rho_t(G) = \ftdwdef$.
\end{conj}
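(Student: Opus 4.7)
The plan is to produce a finite graph attaining $\ftdwdef$. A first reduction is that, since $\rho_t(G_1 \sqcup G_2)$ is a weighted average of $\rho_t(G_1)$ and $\rho_t(G_2)$, taking the best connected component never decreases density; so we may restrict to connected $G$ throughout, and the conjecture reduces to showing that some connected $G\in\cgdw$ attains $\ftdwdef$.

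Second, we aim to establish a bound $N_0 = N_0(t,\D,\w)$ with the following property: every connected $G\in\cgdw$ on more than $N_0$ vertices can be replaced by a proper subgraph $G'\in\cgdw$ with $\rho_t(G')\ge\rho_t(G)$. Given such a bound, the supremum is realized as a maximum over the finite collection of graphs in $\cgdw$ on at most $N_0$ vertices, so it is attained.

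The natural tool for producing $N_0$ is the detachability machinery of Section~\ref{prelim}. Concretely, one tries to show that every sufficiently large connected $G\in\cgdw$ admits a detachable proper induced subgraph $H$ in the sense of Lemma~\ref{lem:detach}: once $H$ is detachable, $\rho_t(G)$ is a weighted average of $\rho_t(H)$ and $\rho_t(G\wo H)$, so one of these two smaller graphs has density at least $\rho_t(G)$ and the argument iterates. Natural candidates for $H$ are closed neighborhoods $N[v]$ of vertices of largest $t$-weight; by Lemmas~\ref{lem:perfectvx}, \ref{lem:nhd}, and \ref{lem:nhd2}, such neighborhoods are structurally very constrained, with few border vertices, each with few cross-edges, mirroring exactly the arguments used in Sections~\ref{sec:d=w} and \ref{sec:max}.

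The main obstacle is the regime in which no vertex of $G$ has large $t$-weight. In this situation no $N[v]$ is forced to have the rigid structure needed for Lemma~\ref{lem:detach} to apply, and a purely local detachability argument can fail. To push through one would need one of two ingredients: (i) an a priori bound on $\rho_t(G)$ strictly below $\ftdwdef$ whenever no vertex achieves some threshold fraction of $k_{t-1}(\T{\D}{\w-1})$ (applied through Lemma~\ref{lem:ktvbound}), so that extremal graphs always contain a high-weight vertex to which the detachability reduction can be applied; or (ii) a more global argument, perhaps via Benjamini--Schramm limits of bounded-degree graphs, extracting a periodic or otherwise finitely-represented extremal structure from any sequence $G_k$ with $\rho_t(G_k)\to\ftdwdef$. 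Ruling out the possibility that $\ftdwdef$ is only asymptotically approached by larger and larger connected graphs, never attained by a finite one, is the heart of the difficulty.
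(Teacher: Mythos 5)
This statement is not proved in the paper at all: it is posed in Section~\ref{sec:open} precisely as an open conjecture, so there is no proof of record to compare your attempt against, and your proposal must stand on its own. It does not: what you have written is a strategy outline, not a proof, and you say as much yourself. The two reductions you do carry out are fine but easy --- passing to a best connected component preserves membership in $\cG(\D,\w)$ and does not decrease $\rho_t$, and \emph{if} one had a bound $N_0(t,\D,\w)$ such that every connected $G\in\cgdw$ on more than $N_0$ vertices admits a proper subgraph in the class with at least the same $K_t$-density, then $\ftdwdef$ would be attained on the finite set of graphs with at most $N_0$ vertices. But the existence of such an $N_0$ is exactly the content of the conjecture, and you give no argument for it.

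Concretely, the detachability machinery (Lemma~\ref{lem:detach}) only applies when you can exhibit an induced subgraph $H$ whose border vertices form a small clique and have few cross-edges; in the paper this is always extracted from very rigid local structure --- a perfect or nearly perfect vertex, via Lemma~\ref{lem:perfectvx} and the classification Lemmas~\ref{lem:nhd} and~\ref{lem:nhd2}, which are specific to $\D=\w$ or $\D=\w+1$ and to $t$ near $\w$. In the regime where no vertex has $t$-weight close to $k_{t-1}(\T{\D}{\w-1})$, neither of your proposed rescues is substantiated: ingredient (i) would require showing that a uniform weight deficit forces $\rho_t$ strictly below $\ftdwdef$, but Lemma~\ref{lem:ktvbound} only gives a bound of the form $m/t$, and for general $(t,\D,\w)$ one does not even know $\ftdwdef$ well enough to know that this bound beats it (indeed Section~\ref{sec:other} shows the natural Tur\'an candidates can fail to be extremal); ingredient (ii) is a hope, not an argument --- a Benjamini--Schramm limit of a near-extremal sequence is an infinite object, and there is no mechanism offered for converting it into a finite graph in $\cG(\D,\w)$ attaining the supremum. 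So the heart of the problem, ruling out that $\ftdwdef$ is only approached by ever larger connected graphs, remains entirely open in your write-up, just as it does in the paper.
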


\bibliographystyle{plain}
\bibliography{Bibliography}
\end{document}